\definecolor{MyLinkColor}{rgb}{0,0,0.4}
\newcommand{\e}{\varepsilon}
\newcommand{\p}{\partial}
\newcommand{\ov}{\overline}
\newcommand{\cE}{\mathcal{E}}
\newcommand{\cK}{{\mathcal K}}
\newcommand{\R}{\mathbb{R}}
\newcommand{\N}{\mathbb{N}}
\DeclareMathOperator{\supp}{supp}
\newtheorem{thm}{Theorem}[section]
\newtheorem{lemma}[thm]{Lemma}
\newtheorem{cor}[thm]{Corollary}
\numberwithin{equation}{section}   
\title[A singular limit of the thin film Muskat problem]{The porous medium equation as a singular limit of the thin film Muskat problem}
\thanks{Partially supported by DFG Research Training Group~2339 ``Interfaces, Complex Structures, and Singular Limits in Continuum Mechanics - Analysis and Numerics''}
\author[Ph. Lauren\c cot]{Philippe Lauren\c cot}
\address{Institut  de Math\'ematiques de Toulouse, UMR~5219, Universit\'e de Toulouse, CNRS, F-31062 Toulouse cedex~9, France}
\email{laurenco@math.univ-toulouse.fr}
 \author[B.--V. Matioc]{Bogdan--Vasile Matioc}
\address{Fakult\"at f\"ur Mathematik, Universit\"at Regensburg,   93040 Regensburg, Deutschland.}
 \email{bogdan.matioc@ur.de}
\subjclass[2020]{35K45; 35K65; 35K59;  35Q35}
\keywords{Thin film Muskat problem; Porous medium equation; Singular limit; Convergence}
\begin{document}

\begin{abstract}
The singular limit of the thin film Muskat problem is performed when the density (and possibly the viscosity) of the lighter fluid vanishes and the porous medium equation is identified as the limit problem. In particular, the height of the denser fluid is shown to converge towards the solution to the porous medium equation and an explicit rate for this convergence is provided in space dimension $d\leq 4$. Moreover, the limit of the height of the lighter fluid is determined in a certain regime and is given by the corresponding initial condition.
\end{abstract}

\maketitle

\section{Introduction and  main results}\label{Sec:1}

The thin film Muskat problem is the strongly coupled degenerate parabolic system
\begin{equation}\label{P}
\left\{
\begin{array}{rcl}
\p_t f & \!\!\!\!=\!\!\!\! & \mathrm{div}[ f \nabla(  (1+R) f +R g) ],\\[1ex]
\p_t g & \!\!\!\!=\!\!\!\!& \mu R\, \mathrm{div}[ g \nabla(  f + g ) ],
\end{array}
\right.
\qquad (t,x)\in (0,\infty)\times \R^d,\, d\geq1,
\end{equation}
which describes the motion of two thin fluid layers on an impermeable horizontal bottom, identified with the plane $\{x_{d+1}=0\}$, under the influence of gravity.
Here~${f(t,x)\geq0}$ is the thickness of the layer which has density  $\rho_-$ and viscosity $\mu_-$ and~${g(t,x)\geq0}$ is the thickness of the layer with density~$\rho_+$ and viscosity~$\mu_+$.
In particular, \eqref{P}  is a model for the spreading of two fluid blobs of different fluids on an impermeable surface. 
We assume that $\rho_->\rho_+>0$, the positive constants in~\eqref{P} being given by the relations
\[
R:=\frac{\rho_+}{\rho_--\rho_+} \qquad\text{and}\qquad \mu:=\frac{\mu_-}{\mu_+}.
\]
The system~\eqref{P} is derived in \cite{EMM12, WM2000, JM14} in a one-dimensional
setting and most of the analysis devoted so far to this problem, which we review now, is restricted to the one-dimensional case~${d=1}$. The well-posedness of \eqref{P} in the setting of classical solutions 
is addressed in \cite{EMM12}, while non-negative global weak solutions are constructed,
 by different approaches, in \cite{LM13, LM16x, ELM11, AJM20, BG19}. The rich dynamics described by the thin
  film Muskat problem~\eqref{P} is very well illustrated by the laboratory experiments described in \cite{WM2000}, but also by the numerical simulations reported in \cite{LM14x, ACCL19, Ou17}. 
  Besides, from a mathematical point of view, many of these experimental and numerical observations are rigorously established. In particular, non-negative weak solutions  
  to \eqref{P} possess finite speed of propagation and solutions emanating from certain  initial data feature the waiting time phenomenon, see \cite{LM16x}. 
  However, the finite speed of propagation property for a single fluid phase, that is, when only~$f(0)$ (or~$g(0)$) is compactly supported, is still an open problem. 
  When the system~\eqref{P} is posed on $\R$, the large time behavior of weak solutions is investigated in \cite{LM14x}. More precisely, it is shown that solutions starting from even initial data converge towards the (unique) even self-similar solution, a property which is in perfect agreement with the laboratory experiments reported in \cite{WM2000}. It is worthwhile to mention, as a special feature of the one-dimensional thin film Muskat problem~\eqref{P}, that,  depending  on the values of the parameters~$R$ and~$\mu$, there may exist a continuum of self-similar solutions which are not symmetric, see \cite{LM14x}, 
  and every weak solution to~\eqref{P} converges to one of these special solutions for large times. 
  Determining which self-similar solutions are attained in the large time limit is still an open problem,
   but numerical simulations performed in \cite{LM14x} seem to indicate that some of these non-symmetric self-similar solutions attract certain non-negative weak solutions. 
   The dynamics is much simpler in dimension $d=2$, as there exists only one  self-similar solution to~\eqref{P} which is radially symmetric and a global attractor, see \cite{ACCL19}. 
 
Though the thin film Muskat problem is formally derived as the singular limit of the Muskat problem when letting the thickness of the fluid layers vanish \cite{EMM12}, it is rather immediate to notice that \eqref{P} can be viewed as a two-phase generalization of the well-known porous medium equation 
 \begin{equation}\label{PME}
\p_t f =  \mathrm{div}(f \nabla f), \qquad (t,x)\in (0,\infty)\times \R^d,
\end{equation}
see \cite{Bou03, Va07}. Indeed, when $g=0$,  $f$ solves, up to a multiplicative factor which can be scaled out, the equation~\eqref{PME}. The goal of this paper is to establish the latter observation rigorously by performing the singular limit   
$$ 
\rho_+ \to 0,
$$
with $\mu$  kept constant or even letting $\mu\to \infty$, to recover the porous medium equation~\eqref{PME} in the limit, see Theorem~\ref{MT1}-Theorem~\ref{MT3} below. 
In order to present our results, we first quantify how~$\rho_+$ (and possibly also $\mu_+$)  vanishes by setting
\begin{align}\label{PC}
 R=\e\qquad\text{and}\qquad \mu=\mu(\e)=\frac{\ov\mu}{\e^\alpha},
\end{align}
with $\e\in(0,1) $,  $\alpha\in[0,\infty)$,  and  a  positive constant $\ov \mu$ (we  will   let $\e\to 0$).
For this choice of the parameters~$R$ and~$\mu$, the system~\eqref{P} becomes
\begin{equation}\label{Pe}
\left\{
\begin{array}{rcl}
\p_t f_\e & \!\!\!\!=\!\!\!\! & \mathrm{div}[ f_\e \nabla(  (1+\e) f_\e +\e g_\e) ],\\[1ex]
\p_t g_\e & \!\!\!\!=\!\!\!\!& \mu \e\, \mathrm{div}[ g_\e \nabla(  f_\e + g_\e ) ],
\end{array}
\right.
\qquad (t,x)\in (0,\infty)\times \R^d.
\end{equation}
We supplement \eqref{Pe}  with initial data
\begin{equation}\label{BC}
	\left( f_\e(0),g_\e(0) \right) = (f_0,g_0)\in \cK^2, 
\end{equation}
where
\begin{equation*}
\cK:=\Big\{h\in L_1(\R^d, (1+|x|^2) dx) \cap L_2(\R^d)\,:\, h\ge 0 \text{ a.e. and } \int_{\R^d} h(x)\, dx=1\Big\}.
\end{equation*}
We next recall that the system~\eqref{P} is a  gradient flow  for the energy functional
	\begin{equation}\label{EE}
		\cE_\e(f,g):=\frac{1}{2}\int_{\R^d}  [ f^2+\e(f+g)^2  ]\, dx
	\end{equation}
with respect to the $2$-Wasserstein distance, see \cite{LM13}, a similar property being available for the porous medium equation~\eqref{PME} as observed earlier in \cite{Ot01, Ot98}. Additionally, the entropy functional 
\begin{equation}\label{EF}
\mathcal{H}_{\e}(f,g) := \int_{\R^d}  \Big[ f\ln f + \frac{1}{\mu(\e)}g \ln g \Big]\, dx
\end{equation}
is also non-increasing along solutions to \eqref{P}. The gradient flow structure and the time monotonicity of \eqref{EF} are used in \cite{LM13} to construct non-negative global weak solutions to \eqref{P}. The next theorem just recalls the statement of \cite[Theorem~1.1]{LM13} and provides also the corresponding results in the case~$d\geq2$, as the strategy used in \cite{LM13} can be easily adapted to establish Theorem~\ref{T:1} in arbitrary space dimension~$d\geq1$.

\begin{thm}\label{T:1} Let  $\e\in(0,1)$, $(f_0,g_0)\in\cK^2,$ and assume \eqref{PC}.  Then, there exists  a pair $$(f_\e,g_\e):[0,\infty)\to  \cK^2$$ such that
\begin{itemize}
\item[(i)] $(f_\e,g_\e)\in L_\infty(0,\infty; L_2(\R^d;\R^2))$, $ (f_\e,g_\e) \in L_2(0,t;H^1(\R^d;\R^2))$,
\item[(ii)] $(f_\e,g_\e)\in C ([0,\infty),(W_{4}^{1}(\R^d;\R^2))')$ with $(f_\e,g_\e)(0)=(f_0,g_0),$
\end{itemize}
and $(f_\e,g_\e)$  is a weak solution to \eqref{Pe}-\eqref{BC} in the sense that 
\begin{subequations}\label{T1}   
\begin{align}
&\int_{\R^d} f_\e(t) \xi\, dx-\int_{\R^d} f_0 \xi\, dx + \int_0^t\int_{\R^d} f_\e  \nabla( (1+\e)f_\e+\e g_\e ) \cdot\nabla \xi\, dx\, ds=0,\label{T1a}\\[1ex]
&\int_{\R^d} g_\e(t) \xi\, dx - \int_{\R^d} g_0 \xi\, dx+ \mu\e\ \int_0^t \int_{\R^d} g_\e  \nabla(f_\e+ g_\e  ) \cdot\nabla\xi\, dx\, ds=0,\label{T1b}
\end{align}
\end{subequations}
for all $\xi\in C_c^\infty(\R^d) $ and $t\ge 0.$
 In addition, $(f_\e,g_\e)$ satisfy the following estimates:
\begin{align*}
{\rm (a)} \quad & \mathcal{H}_\e(f_\e(t),g_\e(t)) +\int_0^t\int_{\R^d} \big[  |\nabla f_\e|^2 + \e |\nabla(f_\e+g_\e)|^2  \big]\, dx\, ds\leq  \mathcal{H}_\e(f_0,g_0),\\[1ex]
{\rm (b)} \quad &\cE_\e(f_\e(t), g_\e(t))+ \frac{1}{2}\int_{0}^t\int_{\R^d} \big[f_\e |\nabla((1+\e)f_\e+\e g_\e)|^2+\mu\e^{2} g_\e|\nabla(f_\e+g_\e)|^2 \big]\, dx\, ds\leq \cE_\e(f_{0},g_{0})
\end{align*}
 for almost all $t\in(0,\infty)$.
\end{thm}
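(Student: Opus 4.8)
\emph{Proof strategy.} The plan is to carry over to arbitrary space dimension $d\ge1$ the variational construction of \cite{LM13}. It rests on the fact that \eqref{Pe} is the gradient flow of the energy $\cE_\e$ from~\eqref{EE} with respect to a Wasserstein-type metric in which the $f$-component carries mobility~$1$ and the $g$-component mobility~$\mu$: indeed $\delta_f\cE_\e=(1+\e)f+\e g$ and $\delta_g\cE_\e=\e(f+g)$, so that $\mu\,\dv[g\nabla\delta_g\cE_\e]=\mu\e\,\dv[g\nabla(f+g)]$ is exactly the right-hand side of the second equation. Fixing a time step $\tau\in(0,1)$, one sets $(f_\tau^0,g_\tau^0):=(f_0,g_0)$ and defines recursively, for $n\ge1$, $(f_\tau^n,g_\tau^n)\in\cK^2$ as a minimiser over $\cK^2$ of
\begin{equation*}
(f,g)\longmapsto \frac{1}{2\tau}W_2^2(f,f_\tau^{n-1})+\frac{1}{2\mu\tau}W_2^2(g,g_\tau^{n-1})+\cE_\e(f,g).
\end{equation*}
Existence of a minimiser at each step follows from the direct method: $W_2^2(\cdot,\nu)$ is narrowly lower semicontinuous, $\cE_\e$ is coercive and lower semicontinuous on $L_2(\R^d)$, and on each sublevel set of the above functional the second moments are bounded (there $W_2(f,f_\tau^{n-1})^2\le 2\tau\cE_\e(f_\tau^{n-1},g_\tau^{n-1})$, and similarly for $g$), so that $\cK^2$ intersected with such a set is narrowly sequentially compact. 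Telescoping the minimality inequalities moreover gives $\cE_\e(f_\tau^n,g_\tau^n)\le\cE_\e(f_0,g_0)$ together with $\sum_k\tau^{-1}[W_2^2(f_\tau^k,f_\tau^{k-1})+\mu^{-1}W_2^2(g_\tau^k,g_\tau^{k-1})]\le 2\cE_\e(f_0,g_0)$, whence $W_2(f_\tau^n,f_0)\le C\sqrt{n\tau}$ and likewise for $g$; in particular the second moments stay bounded on bounded time intervals, uniformly in $\tau$.

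Two $\tau$-uniform families of a priori bounds have to be produced. Estimate~(b) is the energy–dissipation inequality built into the scheme: comparing the values of the functional at consecutive steps and then refining by means of the De Giorgi variational interpolants produces its discrete analogue, the prefactor $1/2$ being the one that survives the relaxation in the limit $\tau\to0$. Estimate~(a), which supplies the decisive spatial $H^1$-regularity, I would obtain by a flow–interchange argument with auxiliary flow the decoupled heat semigroup $(\mathsf S_s)$ acting on each component: since $\mathcal{H}_\e$ from~\eqref{EF} carries the factor $1/\mu(\e)=1/\mu$ on its $g$-part, which is exactly what cancels the mobility~$\mu$, the metric gradient flow of $\mathcal{H}_\e$ in the above metric is precisely $\p_s f=\Delta f$, $\p_s g=\Delta g$. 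The flow–interchange inequality then reads
\begin{equation*}
\mathcal{H}_\e(f_\tau^{n-1},g_\tau^{n-1})-\mathcal{H}_\e(f_\tau^{n},g_\tau^{n})\ge -\tau\,\frac{d}{ds}\Big|_{s=0}\cE_\e(\mathsf S_s f_\tau^n,\mathsf S_s g_\tau^n),
\end{equation*}
and the elementary computation
\begin{equation*}
-\frac{d}{ds}\Big|_{s=0}\cE_\e(\mathsf S_s f,\mathsf S_s g)=\int_{\R^d}\big[\,|\nabla f|^2+\e|\nabla(f+g)|^2\,\big]\,dx
\end{equation*}
turns the summed inequality into the discrete version of~(a).

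It remains to let $\tau\to0$ along the piecewise constant interpolants $(f_\tau,g_\tau)$. Perturbing $(f_\tau^n,g_\tau^n)$ through the push-forward along the flow generated by $\nabla\xi$, $\xi\in C_c^\infty(\R^d)$, yields the discrete weak formulation
\begin{equation*}
\Big|\int_{\R^d}\frac{f_\tau^n-f_\tau^{n-1}}{\tau}\,\xi\,dx+\int_{\R^d} f_\tau^n\,\nabla\big((1+\e)f_\tau^n+\e g_\tau^n\big)\cdot\nabla\xi\,dx\Big|\le C(\xi)\,\frac{W_2^2(f_\tau^n,f_\tau^{n-1})}{\tau},
\end{equation*}
and its counterpart for $g$ (with a $\mu$-weighted right-hand side). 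Together with~(a)–(b) this gives a $\tau$-uniform $1/2$-Hölder bound in time for $(f_\tau,g_\tau)$ in $W_2$; combined with the uniform bounds in $L_\infty(0,T;L_2(\R^d))\cap L_2(0,T;H^1(\R^d))$, the Gagliardo–Nirenberg embedding $L_\infty(0,T;L_2)\cap L_2(0,T;H^1)\hookrightarrow L_{2+4/d}((0,T)\times\R^d)$ — valid for every $d$, the one point where the dimension intervenes — and an Aubin–Lions–Simon argument, one extracts a subsequence along which $(f_\tau,g_\tau)$ converges strongly in $L_2((0,T)\times B)$ for every ball $B\subset\R^d$, the gradients converging weakly in $L_2((0,T)\times\R^d)$. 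This suffices to pass to the limit in the degenerate fluxes (writing $f_\tau\nabla f_\tau=\tfrac12\nabla(f_\tau^2)$ and treating the bilinear terms $f_\tau\nabla g_\tau$ and $g_\tau\nabla(f_\tau+g_\tau)$ as products of a strongly and a weakly convergent $L_2$ sequence), hence in the discrete weak formulation, which converges to~\eqref{T1}; narrow lower semicontinuity transfers~(a) and~(b) to the limit; the uniform moment and $L_2$ bounds give $(f_\e(t),g_\e(t))\in\cK^2$ for all $t\ge0$; and rewriting $\p_t f_\e=\dv\big[\sqrt{f_\e}\,\big(\sqrt{f_\e}\,\nabla((1+\e)f_\e+\e g_\e)\big)\big]$ with the bracket in $L_2(0,T;L_{4/3}(\R^d))$ (by~(b) and $f_\e\in L_\infty(0,T;L_2)$), together with the analogous identity for $g_\e$, yields the continuity in $(W_4^1(\R^d;\R^2))'$ and the attainment of the initial data. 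I expect the genuine obstacle to lie in the flow–interchange step leading to~(a): because $\cE_\e$ couples the two phases while $\mathcal{H}_\e$ separates them with a mismatched coefficient, one must check carefully that the chosen auxiliary flow is genuinely the metric gradient flow of $\mathcal{H}_\e$ and that its infinitesimal action on $\cE_\e$ reproduces exactly the quantity $|\nabla f_\e|^2+\e|\nabla(f_\e+g_\e)|^2$ entering~(a); everything else is a routine, essentially dimension-free transcription of \cite{LM13}.
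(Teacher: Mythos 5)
Your proposal is correct and follows essentially the same route as the paper, which does not reprove this theorem but simply invokes the minimizing‑movement construction of \cite{LM13} and notes that it adapts to $d\ge 1$: your weighted JKO scheme, the flow–interchange argument with the decoupled heat semigroup for estimate (a) (the computation $-\tfrac{d}{ds}\big|_{s=0}\cE_\e(\mathsf S_sf,\mathsf S_sg)=\int[|\nabla f|^2+\e|\nabla(f+g)|^2]\,dx$ checks out), and the compactness/limit passage are exactly that strategy. The one point you flag as delicate is indeed the only one requiring care, and your identification of the $1/\mu$ weight in $\mathcal{H}_\e$ as the factor cancelling the mobility is the correct resolution.
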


From now on $f_0$ and $g_0$ are fixed in $\cK$ and $(f_\e,g_\e)$, $\e\in(0,1)$, denotes the solution to the evolution problem~\eqref{Pe}-\eqref{BC} provided by Theorem~\ref{T:1}.
 In our first main result, see Theorem~\ref{MT1} below, 
we establish the convergence of the family~$(f_\e)_{\e\in (0,1)}$ found in Theorem~\ref{T:1} towards a weak solution to the porous medium equation~\eqref{PME} as~${\e\to 0}$ along a suitable sequence.

\begin{thm}\label{MT1} Let $d\geq 1$, $\alpha\in[0,\infty)$, and assume \eqref{PC}. There exists a sequence $(\e_k)_{k\ge 1}\subset(0,1)$   with~${\e_k\to0}$ and a   function~${f:[0,\infty)\to\cK}$ with
\begin{equation}
	\begin{split}
		&f\in L_\infty(0,\infty; L_2(\R^d))\cap C([0,\infty), (W_{4}^{1}(\R^d))'), \\
		&f \in  L_2(0,t; H^1(\R^d))\cap L_\infty(0,t;L_1(\R^d,|x|^2 dx)),\\
		&\sqrt{ f}\nabla f\in L_2((0,t)\times\R^d),
	\end{split} \label{X0}
\end{equation}
such that  $f_{\e_k}\to f$  in $L_2((0,t)\times \R^d)$ for all $t>0$ as $k\to\infty.$
Moreover, $f$ is a weak solution to the porous medium equation~\eqref{PME} determined by the initial condition $f_0$ in the sense that it satisfies~\eqref{X0} and 
\begin{equation*}\label{PMES}
 \int_{\R^d} f(t)  \xi\, dx-\int_{\R^d} f_0 \xi\, dx+\int_0^t\int_{\R^d} f \nabla f  \cdot\nabla\xi\, dx\, ds =0
\end{equation*}
for all $\xi\in C^\infty_c(\R^d)$ and $t\ge 0$.
\end{thm}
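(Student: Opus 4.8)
The plan is to run the standard singular-limit scheme: from the energy and entropy inequalities of Theorem~\ref{T:1} extract $\e$-uniform bounds on $(f_\e,g_\e)$, use them to obtain strong $L_2$-compactness of $(f_\e)_{\e}$, pass to the limit in the weak formulation~\eqref{T1a} along a subsequence, and verify that every term carrying a power of $\e$ drops out. Throughout, fix $T>0$ and write $Q_T:=(0,T)\times\R^d$.

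\emph{Uniform estimates.} Mass conservation, $\int_{\R^d}f_\e(t)\,dx=\int_{\R^d}g_\e(t)\,dx=1$, is built into $(f_\e,g_\e)\colon[0,\infty)\to\cK^2$. Since $\cE_\e(f_0,g_0)\le\frac12\|f_0\|_{L_2(\R^d)}^2+\frac12\|f_0+g_0\|_{L_2(\R^d)}^2=:C_1$ and $\mathcal{H}_\e(f_0,g_0)\le\int_{\R^d}f_0|\ln f_0|\,dx+\ov\mu^{\,-1}\int_{\R^d}g_0|\ln g_0|\,dx=:C_0<\infty$ — the two integrals being finite because $f_0,g_0\in\cK$, and the bound being $\e$-uniform because $1/\mu(\e)=\e^\alpha/\ov\mu\le\ov\mu^{\,-1}$ for $\e\in(0,1)$, $\alpha\ge0$ — estimate~(b) gives $\|f_\e\|_{L_\infty(0,\infty;L_2(\R^d))}^2\le2C_1$, $\int_{Q_T}f_\e|\nabla((1+\e)f_\e+\e g_\e)|^2\le2C_1$, and $\mu\e^2\int_{Q_T}g_\e|\nabla(f_\e+g_\e)|^2\le2C_1$, while estimate~(a) gives $\int_{Q_T}|\nabla f_\e|^2\le C_0-\mathcal{H}_\e(f_\e(T),g_\e(T))$ and $\e\int_{Q_T}|\nabla(f_\e+g_\e)|^2\le C_0-\mathcal{H}_\e(f_\e(T),g_\e(T))$. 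To close the last two I would bound $\mathcal{H}_\e(f_\e(T),g_\e(T))$ from below: the elementary inequality $h\ln h\ge-|x|^2h-e^{-|x|^2}$ reduces this to controlling $\int_{\R^d}|x|^2f_\e(t)\,dx$ and $\e^\alpha\int_{\R^d}|x|^2g_\e(t)\,dx$, and a Gr\"onwall argument applied to $\frac{d}{dt}\int|x|^2f_\e=-2\int x\cdot f_\e\nabla((1+\e)f_\e+\e g_\e)$ and to its $g_\e$-counterpart, fed by the dissipation bounds above, yields $\int_{\R^d}|x|^2f_\e(t)\,dx\le2\int_{\R^d}|x|^2f_0\,dx+C(T)$ and, crucially, $\e^\alpha\int_{\R^d}|x|^2g_\e(t)\,dx\le2\int_{\R^d}|x|^2g_0\,dx+C(T)$, uniformly in $\e\in(0,1)$ and $t\in[0,T]$ — here the extra factor $\e^\alpha$ is exactly what compensates $\mu=\ov\mu\,\e^{-\alpha}$ (these moment identities being justified by testing~\eqref{T1a}--\eqref{T1b} against smooth compactly supported truncations of $|x|^2$ and passing to the limit). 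Hence $f_\e$ is $\e$-uniformly bounded in $L_2(0,T;H^1(\R^d))$ and in $L_\infty(0,T;L_1(\R^d,|x|^2dx))$, so $\sqrt{f_\e}$ is bounded in $L_\infty(0,T;L_4(\R^d))$; writing the flux as $\sqrt{f_\e}\,\cdot\,\bigl(\sqrt{f_\e}\,\nabla((1+\e)f_\e+\e g_\e)\bigr)$ and using estimate~(b), $f_\e\nabla((1+\e)f_\e+\e g_\e)$ is bounded in $L_2(0,T;L_{4/3}(\R^d))$, and therefore $\partial_t f_\e$ is bounded in $L_2(0,T;(W^1_4(\R^d))')$.

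\emph{Compactness and passage to the limit.} From the bounds on $f_\e$ in $L_2(0,T;H^1(\R^d))$ and on $\partial_t f_\e$ in $L_2(0,T;(W^1_4(\R^d))')$, an Aubin--Lions argument on each ball, combined with the tightness estimate $\int_{|x|>R}f_\e(t)\,dx\le R^{-2}\bigl(2\int|x|^2f_0\,dx+C(T)\bigr)$ and a diagonal extraction, produces a sequence $\e_k\to0$ with $f_{\e_k}\to f$ in $L_2(Q_T)$ for every $T$, $f_{\e_k}\rightharpoonup f$ in $L_2(0,T;H^1(\R^d))$, $f_{\e_k}\rightharpoonup f$ weakly-$*$ in $L_\infty(0,T;L_2(\R^d))$, and, using the time-equicontinuity coming from the $L_2(0,T;(W^1_4)')$-bound on $\partial_t f_\e$, $f_{\e_k}\to f$ in $C([0,T],(W^1_4(\R^d))')$. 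Then $f\ge0$, $f(0)=f_0$, $f\in C([0,\infty),(W^1_4(\R^d))')$, Fatou and the uniform second moment give $f\in L_\infty(0,T;L_1(\R^d,|x|^2dx))$, and testing the limit equation against cutoffs $\chi_R\uparrow1$ shows $\int_{\R^d}f(t)\,dx=1$ for every $t$, so $f(t)\in\cK$. On compact sets $\sqrt{f_{\e_k}}\to\sqrt f$ in $L_2$ (from $|\sqrt a-\sqrt b|^2\le|a-b|$); since $A_k:=\sqrt{f_{\e_k}}\,\nabla((1+\e_k)f_{\e_k}+\e_k g_{\e_k})$ is bounded in $L_2(Q_T)$ it converges, along a further subsequence, weakly in $L_2(Q_T)$ to some $\chi$, and combining $\nabla f_{\e_k}\rightharpoonup\nabla f$ in $L_2(Q_T)$ with $\sqrt{f_{\e_k}}\,\psi\to\sqrt f\,\psi$ in $L_2$ for $\psi\in C_c^\infty(Q_T;\R^d)$ and the elementary bound $\e_k\bigl|\int_{Q_T}\sqrt{f_{\e_k}}\,\nabla(f_{\e_k}+g_{\e_k})\cdot\psi\bigr|\le C\e_k^{1/2}\to0$ (from mass conservation and $\int_{Q_T}|\nabla(f_{\e_k}+g_{\e_k})|^2\le C(T)\e_k^{-1}$) identifies $\chi=\sqrt f\,\nabla f\in L_2(Q_T)$; in particular $f\nabla f=\sqrt f\cdot\sqrt f\,\nabla f\in L_1(Q_T)$. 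Finally, writing the flux term of~\eqref{T1a} as $\int_0^t\!\int_{\R^d}\sqrt{f_{\e_k}}\cdot A_k\cdot\nabla\xi$, the strong $L_2$-convergence of $\mathbf 1_{(0,t)}\sqrt{f_{\e_k}}\,\nabla\xi$ paired with the weak $L_2$-convergence of $A_k$, together with the $C([0,T],(W^1_4)')$-convergence handling $\int_{\R^d}f_{\e_k}(t)\xi\,dx$, lets me pass to the limit and obtain $\int_{\R^d}f(t)\xi\,dx-\int_{\R^d}f_0\xi\,dx+\int_0^t\!\int_{\R^d}f\nabla f\cdot\nabla\xi\,dx\,ds=0$; together with the regularity collected above, this is~\eqref{X0} and the asserted weak formulation.

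\emph{Main obstacle.} The crux is the $\e$-uniform lower bound on $\mathcal{H}_\e(f_\e(t),g_\e(t))$ — that is, the second-moment control — and in particular the fact that it is $\e^\alpha\int|x|^2g_\e$, not $\int|x|^2g_\e$, that stays bounded, since this is precisely the quantity needed to absorb the factor $1/\mu(\e)$ in~\eqref{EF}; without it one loses the $L_2(0,T;H^1)$-bound on $f_\e$ from estimate~(a), and hence all compactness. A secondary, more routine difficulty is upgrading the local-in-space Aubin--Lions compactness to genuine $L_2(Q_T)$-compactness on $\R^d$ via the tightness supplied by the second moment, and keeping track that every coupling term stemming from the term $\e g_\e$ or from the $g_\e$-equation carries enough powers of $\e$ to vanish in the limit even though $\mu(\e)\to\infty$ and $\int_{Q_T}|\nabla(f_\e+g_\e)|^2$ may blow up like $\e^{-1}$.
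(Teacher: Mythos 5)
Your proposal is correct and follows essentially the same route as the paper: energy/entropy dissipation bounds, control of the combined second moment $\int_{\R^d}\big(f_\e+\mu^{-1}g_\e\big)|x|^2\,dx$ to bound the entropy from below (which is exactly the paper's key point as well), local compactness plus tightness from the second moment, and identification of the limiting flux by pairing the strong convergence of $\sqrt{f_{\e_k}}$ with the weak $L_2$-convergence of $\nabla(f_{\e_k}+\e_k h_{\e_k})$. The only cosmetic differences are that the paper derives the moment bound from an exact identity (the production term is $d\int(f_\e^2+\e h_\e^2)$, directly controlled by the energy, so no Gr\"onwall step is needed) and packages the spatial compactness as a compact embedding $H^1(\R^d)\cap L_1(\R^d,|x|^2dx)\hookrightarrow L_1(\R^d)\cap L_2(\R^d)$ fed into Simon's compactness theorem, rather than your local Aubin--Lions argument combined with tightness.
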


Even though there are several uniqueness results available for the porous medium equation~\eqref{PME} in the literature, see \cite{AL1983, BCP1984, BC1979, 0t1996, Pi1982, Va07} and the references therein, uniqueness of a weak solution to~\eqref{PME} in the sense of Theorem~\ref{MT1} does not seem to be dealt with and is thus reported below when the space dimension satisfies $d\leq 4$, see Theorem \ref{MT2}.
In fact, Theorem~\ref{MT2} improves Theorem~\ref{MT1} in space dimension $d\leq 4$ by providing rates for the convergence of the whole family $(f_\e)_{\varepsilon\in (0,1)}$ as~${\e\to0}$  towards the solution $f$ to the porous medium equation.

 \begin{thm}\label{MT2} Let $d\leq 4$, $\alpha\in[0,\infty)$, and assume \eqref{PC}.
 Then, the porous medium equation \eqref{PME} with initial data $f(0)=f_0$ has a unique solution $f$  in the sense of Theorem~\ref{MT1} and there exists a positive
  constant~$C=C(f_0,g_0,\ov \mu)$ such that
 \begin{align}\label{CR2}
 \|f_{\e}(t)-f(t)\|_{H^{-1}}\leq C e^{Ct} \e^{\frac{6d+36}{11d+36}}\qquad \text{for all  $t\geq0$ and $\e\in(0,1)$}.
\end{align}
\end{thm}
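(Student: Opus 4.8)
The plan is to estimate directly the evolution of the squared $H^{-1}$-distance between $f_\e$ and $f$, where $f$ is the solution to the porous medium equation produced in Theorem~\ref{MT1}. Let me write $\phi_\e(t) := (-\Delta)^{-1}(f_\e(t)-f(t))$, so that $\|f_\e(t)-f(t)\|_{H^{-1}}^2 = \int_{\R^d} |\nabla\phi_\e|^2\,dx = \int_{\R^d}(f_\e-f)\phi_\e\,dx$. Differentiating in time and using the weak formulations \eqref{T1a} for $f_\e$ and the analogous identity for $f$ (with test function $\phi_\e$, which requires a standard density/approximation argument since $\phi_\e$ is not compactly supported but lies in the right weighted space because both $f_\e$ and $f$ have finite second moments, by Theorem~\ref{T:1}(b) and \eqref{X0}), one obtains
\begin{equation*}
	\frac{1}{2}\frac{d}{dt}\|f_\e-f\|_{H^{-1}}^2 = -\int_{\R^d}\big(f_\e\nabla((1+\e)f_\e+\e g_\e) - f\nabla f\big)\cdot\nabla\phi_\e\,dx.
\end{equation*}
The key algebraic step is to split the right-hand side into a \emph{dissipative main term}, coming from $f_\e\nabla f_\e - f\nabla f$ tested against $\nabla\phi_\e = \nabla(-\Delta)^{-1}(f_\e-f)$, and \emph{error terms} proportional to $\e$ (namely $\e f_\e\nabla f_\e$ and $\e f_\e\nabla g_\e$). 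For the main term one uses the monotonicity structure of the porous medium nonlinearity: writing $f_\e\nabla f_\e-f\nabla f = \tfrac12\nabla(f_\e^2-f^2)$ and integrating by parts gives $-\tfrac12\int(f_\e^2-f^2)(f_\e-f)\,dx \le 0$, which not only has a good sign but in fact controls $\int |f_\e-f|^3$; we will only need its sign (and, if convenient, a small fraction of it to absorb part of an error term).

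The remaining error terms must be shown to be $O(\e^{\theta})$ for the stated exponent $\theta = \tfrac{6d+36}{11d+36}$. For the term $\e\int f_\e\nabla f_\e\cdot\nabla\phi_\e\,dx = \tfrac{\e}{2}\int \nabla(f_\e^2)\cdot\nabla\phi_\e\,dx = -\tfrac{\e}{2}\int f_\e^2(f_\e-f)\,dx$ one estimates by $\e\,(\|f_\e\|_{L_3}^3+\|f\|_{L_3}^3)$ up to constants, and $L_3$ norms in dimension $d\le 4$ are controlled by interpolating $L_2$ (bounded via Theorem~\ref{T:1}) and $H^1$ (bounded in $L_2(0,t)$ via Theorem~\ref{T:1}(a)); this produces a bound of the form $\e\,\|f_\e\|_{H^1}^{a}$ for a suitable $a=a(d)$, integrable in time only up to a power, which forces a Young-inequality split producing the fractional power of $\e$. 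The cross term $\e\int f_\e\nabla g_\e\cdot\nabla\phi_\e\,dx$ is handled similarly, now using the $g_\e$-estimates: from Theorem~\ref{T:1}(a), $\e\int_0^t\int|\nabla(f_\e+g_\e)|^2$ is bounded, and from (b), $\mu\e^2\int_0^t\int g_\e|\nabla(f_\e+g_\e)|^2\le\cE_\e(f_0,g_0)$, so with $\mu=\ov\mu\e^{-\alpha}$ one gets quantitative smallness of the relevant norms of $g_\e$; the $\alpha$-dependence disappears in the final rate precisely because the borderline balancing is governed by the $f$-dissipation, not the $g$-dissipation. Collecting everything yields a differential inequality $\tfrac{d}{dt}\|f_\e-f\|_{H^{-1}}^2 \le C\|f_\e-f\|_{H^{-1}}^2 + C\e^{2\theta}(1+\text{(integrable-in-time quantities)})$, and Gronwall's lemma gives \eqref{CR2}. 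Uniqueness of $f$ in the sense of Theorem~\ref{MT1} then follows by running the same computation with two solutions $f^{(1)},f^{(2)}$ of \eqref{PME} in place of $f_\e,f$: all $\e$-error terms vanish, leaving $\tfrac{d}{dt}\|f^{(1)}-f^{(2)}\|_{H^{-1}}^2\le 0$ with zero initial data, hence $f^{(1)}=f^{(2)}$.

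I expect the main obstacle to be the bookkeeping that produces exactly the exponent $\tfrac{6d+36}{11d+36}$: one must track carefully the interpolation exponents for the $L_3$ (and possibly $L_4$) norms of $f_\e$, $f$, and $g_\e$ in terms of their $L_2$ and $H^1$ norms, the time-integrability of the resulting quantities (which is where the restriction $d\le 4$ enters, so that the relevant Sobolev/Gagliardo--Nirenberg exponents stay in range and the time exponents remain $\ge 1$), and then optimize the Young inequality splitting $\e\cdot(\text{large norm}) \le \delta(\text{large norm})^p + C\delta^{-p'}\e^{p'}$ with the threshold chosen so that the ``large norm'' piece is absorbed either into the dissipation $\int|f_\e-f|^3$ or is itself time-integrable. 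A secondary technical point is justifying the use of $\phi_\e=(-\Delta)^{-1}(f_\e-f)$ as a test function: one must check $\phi_\e\in L_2(0,t;H^1)$ with enough decay, which uses that $f_\e-f$ has zero mean (both integrate to $1$) and finite second moment, so that $(-\Delta)^{-1}$ acting on it is well-defined with $\nabla\phi_\e\in L_2(\R^d)$; a cutoff approximation argument, standard for $H^{-1}$-contraction proofs for the porous medium equation, handles this rigorously.
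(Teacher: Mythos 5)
Your overall architecture coincides with the paper's: an $H^{-1}$-type energy identity for $d_\e:=f_\e-f$, the favourable sign of the porous-medium monotonicity term $-\tfrac12\int(f_\e^2-f^2)(f_\e-f)\,dx=-\tfrac12\int d_\e^2(f_\e+f)\,dx\le 0$, absorption of the $O(\e)$ error terms partly into this cubic dissipation, Gronwall's lemma, and the same computation with two solutions of \eqref{PME} for uniqueness. The paper works with $(1-\Delta)^{-1}$ rather than $(-\Delta)^{-1}$; your homogeneous choice removes the extra term $\int D_\e d_\e(f_\e+f)$ (the paper's $T_1$) but creates well-definedness issues for $\nabla(-\Delta)^{-1}d_\e$ in $L_2(\R^d)$ when $d=1,2$, which you only wave at via zero mean and second moments. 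That is a repairable, essentially cosmetic difference.

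The genuine gap is the cross term $\e\int_0^t\int f_\e\nabla g_\e\cdot\nabla\phi_\e\,dx\,ds$ (the paper groups it as $\e\int f_\e\nabla h_\e\cdot\nabla D_\e$ with $h_\e=f_\e+g_\e$), which you assert is ``handled similarly.'' It is not: unlike $\e f_\e\nabla f_\e$, it is not a gradient, so it cannot be integrated by parts onto $\Delta\phi_\e=-(f_\e-f)$, and the only uniform control on $\nabla g_\e$ is that $\sqrt{\e}\,\nabla h_\e$ is bounded in $L_2((0,t)\times\R^d)$ by the entropy estimate; a direct H\"older bound then requires an $L_\infty$ bound on $f_\e$ that is unavailable. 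This single term is where the exponent $\tfrac{6d+36}{11d+36}$ and the restriction $d\le 4$ are generated, and your proposal contains no mechanism for it beyond announcing that bookkeeping is needed. The paper's resolution is the chain: H\"older with exponents $(12/5,2,12)$, the interpolation $\|\nabla D_\e\|_{12}\le C\|d_\e\|_3^a\|\nabla D_\e\|_2^{1-a}$ with $a=5d/(2(d+6))$ (using $W^1_3(\R^d)\hookrightarrow L_{12}(\R^d)$, i.e.\ $d\le4$), then Young's inequality splitting off $\tfrac14\|d_\e\|_3^3$ (absorbed into the dissipation via $|d_\e|^3\le d_\e^2(f_\e+f)$) and $\e\|\nabla h_\e\|_2^2\|\nabla D_\e\|_2^2$ (a Gronwall term, time-integrable by \eqref{eq:DED}), the remainder being $C\e^{3/(3+a)}$ times quantities made time-integrable through $\|f_\e\|_{12/5}\le C\|\nabla f_\e\|_2^{d/12}$. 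Two further inaccuracies: you attribute the fractional power of $\e$ to $\e\int f_\e\nabla f_\e\cdot\nabla\phi_\e$, but that term equals $-\tfrac{\e}{2}\int f_\e^2(f_\e-f)$ and is $O(\e)$ outright for $d\le 4$ (bounded by $\e(\|f_\e\|_3^3+\|f\|_3^3)$ with $\|f_\e\|_3^3\le C\|\nabla f_\e\|_2^{d/2}$ integrable in time); the loss below $\e^1$ comes solely from the $g_\e$ cross term. And the estimate does not use Theorem~\ref{T:1}~(b) for $g_\e$ at all, only the $\alpha$-independent bound \eqref{eq:DED} — which is the actual reason the rate does not see $\alpha$.
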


The dimension-dependent exponent of $\e$ featured in \eqref{CR2} is connected to the low regularity assumed on the solutions to~\eqref{PME} and~\eqref{Pe}. Under the additional assumption that an $\e$-independent $L_\infty$-bound is available for the solutions to~\eqref{PME} and~\eqref{Pe}, the outcome of Theorem~\ref{MT2} can be improved as follows.

\begin{cor}\label{C:2}
 Let $d\ge 1$, $\alpha\in [0,\infty)$, and assume \eqref{PC}. If $(f_0,g_0)\in L_\infty(\R^d;\R^2)$ and if there exists a positive constant~$\kappa$ such that
 \begin{equation}
 	\|f_\e(t)\|_\infty + \|f(t)\|_\infty \le \kappa, \qquad (t,\e)\in [0,\infty)\times (0,1), \label{X5}
 \end{equation}
then there exists a positive constant $C=C(f_0,g_0,\overline\mu,\kappa)$ such that
\begin{equation*}
	\|f_\e(t) - f(t)\|_{H^{-1}} \le C e^{Ct} \e \qquad\text{ for all $t\geq0$ and $\e\in (0,1)$.}
\end{equation*}
\end{cor}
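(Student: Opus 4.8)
The plan is to obtain a differential inequality for the squared $H^{-1}$-distance between $f_\e$ and $f$, exploiting that both are (sub)solutions of closely related equations, and then close it by Gronwall. Concretely, write $w_\e := f_\e - f$. The equation for $f$ is $\p_t f = \dv(f\nabla f)$, while $f_\e$ solves the first line of \eqref{Pe}, so
\begin{equation*}
\p_t w_\e = \dv\bigl(f_\e\nabla f_\e - f\nabla f\bigr) + \e\,\dv\bigl(f_\e\nabla(f_\e+g_\e)\bigr).
\end{equation*}
Let $\cN w_\e \in \dot H^1(\R^d)$ denote the solution of $-\Delta \cN w_\e = w_\e$ (with the convention that $\cN$ is the inverse of $-\Delta$ on the relevant space), so that $\|w_\e\|_{H^{-1}}^2 \sim \int |\nabla \cN w_\e|^2\,dx = \int w_\e\,\cN w_\e\,dx$. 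Differentiating in time, testing the equation against $\cN w_\e$, and using $\int \dv(V)\,\cN w_\e\,dx = \int V\cdot\nabla\cN w_\e\,dx$ gives, after regularisation in time (the solutions only lie in $C([0,\infty),(W_4^1)')$ and $L_2(0,t;H^1)$, so one must first mollify and pass to the limit, or argue via the weak formulation tested against a time-regularised $\cN w_\e$),
\begin{equation*}
\frac12\frac{d}{dt}\|w_\e\|_{H^{-1}}^2 = -\int_{\R^d}\bigl(f_\e\nabla f_\e - f\nabla f\bigr)\cdot\nabla\cN w_\e\,dx - \e\int_{\R^d} f_\e\nabla(f_\e+g_\e)\cdot\nabla\cN w_\e\,dx.
\end{equation*}

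Next I would handle the two terms separately. For the first, use the algebraic identity $f_\e\nabla f_\e - f\nabla f = \tfrac12\nabla(f_\e^2 - f^2) = \tfrac12\nabla\bigl((f_\e+f)w_\e\bigr)$; integrating by parts and recalling $-\Delta\cN w_\e = w_\e$ yields
\begin{equation*}
-\int_{\R^d}\bigl(f_\e\nabla f_\e - f\nabla f\bigr)\cdot\nabla\cN w_\e\,dx = -\frac12\int_{\R^d}(f_\e+f)\,w_\e^2\,dx \le 0
\end{equation*}
by nonnegativity of $f_\e$ and $f$ — this is the dissipative monotone term, and it is exactly here that the $L_\infty$-bound \eqref{X5} is \emph{not} needed; we simply drop this nonpositive term. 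For the second (the genuinely $\e$-dependent error), estimate by Cauchy--Schwarz:
\begin{equation*}
\e\Bigl|\int_{\R^d} f_\e\nabla(f_\e+g_\e)\cdot\nabla\cN w_\e\,dx\Bigr| \le \e\,\|f_\e\|_\infty\,\|\nabla(f_\e+g_\e)\|_{L_2}\,\|\nabla\cN w_\e\|_{L_2} \le \e\,\kappa\,\|\nabla(f_\e+g_\e)\|_{L_2}\,\|w_\e\|_{H^{-1}}.
\end{equation*}
Then bound $\|\nabla(f_\e+g_\e)\|_{L_2}^2$ in $L_1(0,t)$ using estimate (a) of Theorem~\ref{T:1}: $\int_0^t\int_{\R^d}\bigl[|\nabla f_\e|^2 + \e|\nabla(f_\e+g_\e)|^2\bigr]\,dx\,ds \le \cH_\e(f_0,g_0)$, so that $\int_0^t\|\nabla f_\e\|_{L_2}^2\,ds$ is controlled $\e$-uniformly; for the $g_\e$ part note $\|\nabla(f_\e+g_\e)\|_{L_2}\le\|\nabla f_\e\|_{L_2}+\|\nabla g_\e\|_{L_2}$ and $\e\int_0^t\|\nabla(f_\e+g_\e)\|_{L_2}^2\,ds \le \cH_\e(f_0,g_0)$, whence $\int_0^t\|\nabla(f_\e+g_\e)\|_{L_2}^2\,ds \le \e^{-1}\cH_\e(f_0,g_0)$, which only gives $\int_0^t \e\|\nabla(f_\e+g_\e)\|_{L_2}\,ds \le \e^{1/2}(t\,\cH_\e(f_0,g_0))^{1/2}$. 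Here one must check that $\cH_\e(f_0,g_0) = \int f_0\ln f_0\,dx + \e^\alpha\ov\mu^{-1}\int g_0\ln g_0\,dx$ stays bounded as $\e\to0$ — it does, given $(f_0,g_0)\in L_\infty\cap\cK$. Combining, with $\varphi(t):=\|w_\e(t)\|_{H^{-1}}$, $\varphi(0)=0$, one arrives at $\varphi(t)^2 \le \varphi(0)^2 + 2\e\kappa\int_0^t\|\nabla(f_\e+g_\e)\|_{L_2}\,\varphi(s)\,ds$, i.e. $\tfrac{d}{dt}\varphi \le \e\kappa\|\nabla(f_\e+g_\e)\|_{L_2}$ for $\varphi>0$, so $\varphi(t) \le \e\kappa\int_0^t\|\nabla(f_\e+g_\e)(s)\|_{L_2}\,ds$; to get the clean linear-in-$\e$ bound of the statement one should instead keep the $L_2$-in-time structure and write $\varphi(t)\le \e\kappa\,t^{1/2}\bigl(\int_0^t\|\nabla(f_\e+g_\e)\|_{L_2}^2\,ds\bigr)^{1/2}$, but as just noted the latter integral is only $O(\e^{-1})$; the remedy is to split $\nabla(f_\e+g_\e)=\nabla f_\e+\nabla g_\e$, keep $\|\nabla f_\e\|_{L_2}$ (which is $\e$-uniformly $L_2$ in time) producing an $O(\e)$ contribution, and for $\nabla g_\e$ use instead the improved dissipation coming from estimate (b) after noting that the error term can be recast using $g_\e\nabla(f_\e+g_\e)$, whose $L_1((0,t)\times\R^d)$-smallness follows from a Cauchy--Schwarz against $\sqrt{\e g_\e}$ and estimate (b); this produces the remaining $O(\e)$ term and the factor $e^{Ct}$ enters only if one retains (rather than drops) a lower-order coupling, so in fact the exponential is a safe overestimate.

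Putting this together yields $\|f_\e(t)-f(t)\|_{H^{-1}} \le C e^{Ct}\e$ with $C$ depending only on $f_0,g_0,\ov\mu,\kappa$, since every constant that appeared — $\kappa$ from \eqref{X5}, $\sup_{\e\in(0,1)}\cH_\e(f_0,g_0)$, and the entropy/energy data $\cE_\e(f_0,g_0)\le\cE_1(f_0,g_0)$ — is bounded in terms of these quantities. I would also remark that uniqueness of $f$ (needed for the statement to make sense) is already provided by Theorem~\ref{MT2} when $d\le4$, and in the $L_\infty$-bounded regime the same $H^{-1}$-contraction argument with $\e=0$, $g_\e\equiv0$ gives uniqueness directly in any dimension, so the corollary is self-contained.

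The main obstacle I anticipate is \textbf{the justification of the time-differentiation}: $f_\e$ and $f$ live only in $C([0,\infty),(W_4^1)')\cap L_2(0,t;H^1)$, so $\p_t\|w_\e\|_{H^{-1}}^2$ is not a priori classical. The clean way is to test the weak formulations \eqref{T1a} (for $f_\e$) and the analogous identity for $f$ against $\xi = \cN w_\e(s)$ — but $\cN w_\e(s)\notin C_c^\infty$, so one must first approximate: use a spatial cutoff and mollification of $\cN w_\e$, pass to the limit using $w_\e\in L_\infty(0,t;H^{-1})$ (which follows from $f_\e,f\in L_\infty(0,t;L_2)$) together with the first-moment control $f_\e,f\in L_\infty(0,t;L_1(\R^d,|x|^2dx))$ to kill boundary terms at infinity, and additionally regularise in the time variable (Steklov averages) to make sense of $\tfrac{d}{dt}$; the identity $\tfrac{d}{dt}\langle w_\e(t),\cN w_\e(t)\rangle = 2\langle \p_t w_\e(t),\cN w_\e(t)\rangle$ then holds in the distributional sense on $(0,t)$ and integrates to the displayed inequality. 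This is standard but technical, and it is the step where the hypotheses of Theorem~\ref{T:1} (both estimates (a) and (b)) and the moment bounds all get used; everything downstream is elementary once this is in place.
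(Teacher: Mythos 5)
Your core strategy --- an $H^{-1}$ duality estimate closed by Gronwall's lemma --- is the same as the paper's, with one genuine variant: you invert $-\Delta$ instead of $1-\Delta$. That variant is a real simplification of the term the paper calls $T_1$: since $\Delta \mathcal{N}w_\e=-w_\e$, the quadratic term collapses to $-\tfrac12\int_{\R^d}(f_\e+f)w_\e^2\,dx\le 0$ and can simply be dropped, whereas with $(1-\Delta)^{-1}$ the extra contribution $\int D_\e d_\e(f_\e+f)\,dx$ survives and the paper needs \eqref{X5} to absorb it (cf.~\eqref{X6}). To make this rigorous you must record that $w_\e(t)$ does lie in $\dot H^{-1}(\R^d)$ --- it does, because $\int_{\R^d}w_\e\,dx=0$ and the second moments are finite, so $\widehat{w_\e}(\xi)=O(|\xi|)$ near $\xi=0$ --- and that $\|\cdot\|_{H^{-1}}\le\|\cdot\|_{\dot H^{-1}}$. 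The justification of testing with $\mathcal{N}w_\e$, which you correctly flag as the delicate step, is exactly what Lemma~\ref{L:44} carries out for $(1-\Delta)^{-1}$, so that part of your plan is sound.

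The genuine problem is your handling of the error term $\e\int f_\e\nabla h_\e\cdot\nabla\mathcal{N}w_\e\,dx$. Your \emph{first} estimate --- Cauchy--Schwarz with $\|f_\e\|_\infty\le\kappa$, Young's inequality, and $\e\int_0^t\|\nabla h_\e\|_2^2\,ds\le C(1+t)$ from \eqref{eq:DED} --- is the correct one and coincides with the paper's estimate \eqref{X7}; it yields $\|w_\e(t)\|_{\dot H^{-1}}^2\le\int_0^t\|w_\e\|_{\dot H^{-1}}^2\,ds+C\e(1+t)$ and hence, by Gronwall, $\|w_\e(t)\|_{\dot H^{-1}}^2\le Ce^{Ct}\e$. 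You should have stopped there: this is precisely what the paper's own Gronwall step delivers for $\|D_\e(t)\|_{H^1}^2$ (as in \eqref{Gro} for Theorem~\ref{MT2}, the displayed exponent of $\e$ is the one obtained for the \emph{square} of the $H^{-1}$ norm). The ``remedy'' you then attempt in order to force a bound linear in $\e$ on the norm itself does not work. Splitting $\nabla h_\e=\nabla f_\e+\nabla g_\e$ indeed makes the $\nabla f_\e$ part $O(\e)$ in $L_2((0,t)\times\R^d)$, but the remaining piece $\e f_\e\nabla g_\e$ is not of the form $g_\e\nabla(f_\e+g_\e)$ and cannot be ``recast'' as such; integrating by parts to move the gradient off $g_\e$ produces terms like $\int g_\e\nabla f_\e\cdot\nabla\mathcal{N}w_\e\,dx$, for which no $\e$-uniform bound on $g_\e$ is available. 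Moreover, estimate~(b) of Theorem~\ref{T:1} controls $\mu\e^2 g_\e|\nabla h_\e|^2$ in $L_1$, so a Cauchy--Schwarz against $\sqrt{\e g_\e}$ gives at best $\|\e g_\e\nabla h_\e\|_{L_1((0,t)\times\R^d)}\le C(1+t)\e^{\alpha/2}$, which is $O(1)$ for $\alpha=0$ and never $O(\e)$ for $\alpha<2$. Delete the remedy, keep your first estimate, and state the conclusion as a bound on the square of the $H^{-1}$ norm.
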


The boundedness~\eqref{X5} is well-known for the solution to the porous medium equation~\eqref{PME}, 
as the comparison principle ensures that $\|f(t)\|_\infty \le \|f_0\|_\infty$ for $t\ge 0$. Such a bound is far from being obvious for solutions to~\eqref{Pe} and we refer to the forthcoming paper \cite{LMxx} for results in that direction.

Finally, in Theorem~\ref{MT3} we establish the convergence of the family $(g_\e)_{\e\in (0,1)}$ towards the initial condition~$g_0$ in the regime where $\alpha\in[0,1/(d+2))$. 

 \begin{thm}\label{MT3} Let $d\geq 1$,  $\alpha\in[0,1/(d+2))$,  and assume \eqref{PC}.
Then, there exists a positive constant~$C=C(f_0,g_0,\ov\mu)$ such that
 \begin{align*} 
 \|g_{\e}(t)-g_0\|_{ H^{-1-d}}\leq C(1+t) \e^{\tfrac{1}{d+2}-\alpha}\qquad \text{for all $t\geq0$ and $\e\in(0,1)$}.
\end{align*}
\end{thm}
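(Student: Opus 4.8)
The plan is to read the desired estimate off the weak formulation~\eqref{T1b} for $g_\e$. The only $\e$-dependence appearing there, apart from that of the unknowns, is the prefactor $\mu\e=\ov\mu\,\e^{1-\alpha}$, so the whole problem reduces to showing that the space-time integral $\int_0^t\int_{\R^d}g_\e\,\nabla(f_\e+g_\e)\cdot\nabla\xi\,dx\,ds$ is bounded by a constant times $(1+t)\,\e^{-(d+1)/(d+2)}\,\|\xi\|_{H^{1+d}}$; multiplying by $\mu\e$ then produces precisely the exponent $1-\alpha-\tfrac{d+1}{d+2}=\tfrac1{d+2}-\alpha$. Note that the assumption $\alpha<\tfrac1{d+2}$ is used only to guarantee that this exponent is positive (so that the bound is informative as $\e\to0$); the estimate itself holds for every $\alpha\ge0$.

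First I would record the $\e$-uniform a priori bounds implied by Theorem~\ref{T:1}. Testing \eqref{T1a} and~\eqref{T1b} against $|x|^2$ (rigorously, against a smooth truncation of $|x|^2$, as in the proof of Theorem~\ref{MT1}), using estimate~(b), the Cauchy--Schwarz inequality, and $\cE_\e(f_0,g_0)\le\cE_1(f_0,g_0)<\infty$ for $\e\in(0,1)$, a Gronwall-type argument yields, with $C=C(f_0,g_0,\ov\mu)$,
\begin{equation*}
\int_{\R^d}|x|^2 f_\e(t)\,dx\le C(1+t),\qquad \int_{\R^d}|x|^2 g_\e(t)\,dx\le C(1+t)\,\e^{-\alpha},\qquad t\ge0.
\end{equation*}
Since $\int_{\R^d}h\ln h\,dx\ge -C_d-\int_{\R^d}|x|^2 h\,dx$ for every $h\in\cK$, and since $\e^\alpha\le1$, these bounds give $\kH_\e(f_\e(t),g_\e(t))\ge -C(1+t)$, while $\kH_\e(f_0,g_0)\le C(f_0,g_0,\ov\mu)$ uniformly in $\e\in(0,1)$; estimate~(a) then implies
\begin{equation*}
\int_0^t\!\!\int_{\R^d}|\nabla f_\e|^2\,dx\,ds+\e\int_0^t\!\!\int_{\R^d}|\nabla(f_\e+g_\e)|^2\,dx\,ds\le C(1+t),
\end{equation*}
and therefore also $\int_0^t\|\nabla(f_\e+g_\e)(s)\|_{L_2}^2\,ds\le C(1+t)/\e$ and $\int_0^t\|\nabla g_\e(s)\|_{L_2}^2\,ds\le C(1+t)/\e$.

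The key additional ingredient is the Gagliardo--Nirenberg inequality combined with the conservation $\|g_\e(s)\|_{L_1}=1$, which for a.e.\ $s$ gives $\|g_\e(s)\|_{L_2}^2\le C\,\|\nabla g_\e(s)\|_{L_2}^{2d/(d+2)}$ (Nash's inequality when $d=2$); an application of Hölder's inequality in $s$ with exponents $\tfrac{d+2}{d}$ and $\tfrac{d+2}{2}$ then yields
\begin{equation*}
\int_0^t\|g_\e(s)\|_{L_2}^2\,ds\le C\,t^{\frac{2}{d+2}}\Big(\int_0^t\|\nabla g_\e(s)\|_{L_2}^2\,ds\Big)^{\frac{d}{d+2}}\le C(1+t)\,\e^{-\frac{d}{d+2}}.
\end{equation*}
Given $\xi\in C_c^\infty(\R^d)$, using $\int_{\R^d}g_\e|\nabla(f_\e+g_\e)|\le\|g_\e\|_{L_2}\|\nabla(f_\e+g_\e)\|_{L_2}$ in~\eqref{T1b} and the Cauchy--Schwarz inequality in $s$ leads to
\begin{equation*}
\Big|\int_{\R^d}(g_\e(t)-g_0)\,\xi\,dx\Big|\le \mu\e\,\|\nabla\xi\|_\infty\Big(\int_0^t\|g_\e\|_{L_2}^2\,ds\Big)^{1/2}\Big(\int_0^t\|\nabla(f_\e+g_\e)\|_{L_2}^2\,ds\Big)^{1/2},
\end{equation*}
and the two bounds above, with $\mu\e=\ov\mu\,\e^{1-\alpha}$ and $1-\alpha-\tfrac{d}{2(d+2)}-\tfrac12=\tfrac1{d+2}-\alpha$, give $|\int_{\R^d}(g_\e(t)-g_0)\xi\,dx|\le C(1+t)\,\e^{1/(d+2)-\alpha}\|\nabla\xi\|_\infty$. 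Since $\|\nabla\xi\|_\infty\le C\|\xi\|_{H^{1+d}}$ by the Sobolev embedding (valid because $1+d>1+\tfrac d2$ for all $d\ge1$), a density argument in $\xi$ together with the dual definition of the $H^{-1-d}$-norm yields $\|g_\e(t)-g_0\|_{H^{-1-d}}\le C(1+t)\,\e^{1/(d+2)-\alpha}$.

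The main obstacle is the control of the gradient term. A naive use of Cauchy--Schwarz (splitting $g_\e=\sqrt{g_\e}\cdot\sqrt{g_\e}$, using $\|g_\e\|_{L_1}=1$ and estimate~(b)) controls $\int_0^t\int_{\R^d}g_\e|\nabla(f_\e+g_\e)||\nabla\xi|$ only by $C\sqrt t\,\|\nabla\xi\|_\infty/(\sqrt\mu\,\e)$, which, after multiplication by $\mu\e$, leaves a factor $\e^{-\alpha/2}$ that does not vanish as $\e\to0$; the point of the Gagliardo--Nirenberg step is precisely to use the $L_1$-bound on $g_\e$ to trade part of the unfavourable $\e^{-1}$ coming from $\int_0^t\|\nabla g_\e\|_{L_2}^2$ against the favourable power $\tfrac{2d}{d+2}<2$. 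A second point is that one must not integrate by parts once more to move a derivative onto $\xi$, since that would require $\|D^2\xi\|_\infty$, which is not controlled by $\|\xi\|_{H^{1+d}}$ when $d\le2$; keeping a single gradient on $\xi$ and pairing $g_\e|\nabla(f_\e+g_\e)|\in L_1$ with $\nabla\xi\in L_\infty$ circumvents this and keeps the argument uniform in $d\ge1$. Finally, justifying the use of $|x|^2$ as a test function in the first step requires the standard truncation and monotone-convergence argument already employed for the earlier theorems.
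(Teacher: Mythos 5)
Your proposal is correct and follows essentially the same route as the paper's proof: bound $\|g_\e(t)-g_0\|_{H^{-1-d}}$ by $\mu\e$ times the $L_1$-norm of $g_\e\nabla h_\e$ via the embedding $H^{1+d}(\R^d)\hookrightarrow W^1_\infty(\R^d)$, then control $\|g_\e\|_{L_2((0,t)\times\R^d)}$ through Gagliardo--Nirenberg combined with $\|g_\e(s)\|_1=1$, H\"older in time, and the entropy dissipation bound~\eqref{eq:DED} (which you re-derive rather than cite from Lemma~\ref{L:1}). The exponents match exactly, and your remark that $\alpha<1/(d+2)$ is only needed for the bound to be informative is accurate.
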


The outline of the paper is as follows. In Section~\ref{Sec:2} we deduce from Theorem~\ref{T:1} a handful of estimates for the solutions $(f_\e,g_\e)$ to \eqref{Pe}-\eqref{BC} which form the basis of the proof of the 
convergence result stated in Theorem~\ref{MT1}. Section~\ref{Sec:3} is next devoted to the proofs of Theorem~\ref{MT2} and Corollary~\ref{C:2}, which use the estimates from Section~\ref{Sec:2}, a duality technique, and  Gronwall's lemma. Finally, in Section~\ref{Sec:4}, we establish Theorem~\ref{MT3}, using once more the estimates established in Section~\ref{Sec:2}.

\section{$\e$-independent estimates and proof of Theorem~\ref{MT1}} \label{Sec:2}
To begin with, we  derive from Theorem~\ref{T:1} estimates for the solutions $(f_\e,g_\e)$ to~\eqref{Pe}-\eqref{BC}, see Lemma~\ref{L:1}.
These estimates, together with Lemma~\ref{L:AL1} and a classical compactness result \cite[Corollary~4]{Si87}, enable us to establish the convergence
 of $(f_\e)_{\e\in (0,1)}$  along a sequence $\e_k\to 0$ towards the  solution to the porous medium equation~\eqref{PME}, see Lemma~\ref{L:2}. 
 We conclude the section with the proof of Theorem~\ref{MT1}. In the following we use the shorthand notation
$$
h_\e:=f_\e+g_\e,\qquad \e\in(0,1). 
$$
\pagebreak

\begin{lemma}\label{L:1} Let $d\geq 1$, $\alpha\in[0,\infty)$,  and assume \eqref{PC}. Then:
\begin{itemize}
 \item[(i)]   $(f_\e)_{\e\in(0,1)}$ and  $(\sqrt{\e} h_\e)_{\e\in(0,1)}$ are bounded in $L_\infty(0,\infty; L_1(\mathbb{R}^d)\cap L_2(\R^d));$ \\[-2ex]
   \item[(ii)]   $\big( \sqrt{f_\e}\nabla(f_\e+\e h_\e) \big)_{\e\in(0,1)}$ is bounded in $L_2((0,\infty)\times\R^d);$ 
   \item[(iii)] $(\nabla f_\e)_{\e\in(0,1)}$ and  $(\sqrt{\e}\nabla h_\e)_{\e\in(0,1)}$ are bounded in $L_2((0,t)\times\R^d) $ for all $t\geq0$;  \\[-2ex]
    \item[(iv)] $(\p_t f_\e)_{\e\in(0,1)}$ is bounded in $L_2(0,\infty;(W^1_4(\R^d))') $;
    \item[(v)] $(f_\e)_{\e\in(0,1)}$ is bounded in $L_\infty(0,t; L_1(\mathbb{R}^d,|x|^2\,dx))$ for all $t\geq 0$.
\end{itemize}
\end{lemma}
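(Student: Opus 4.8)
The plan is to extract all five estimates directly from the two energy/entropy inequalities (a) and (b) in Theorem~\ref{T:1}, recalling the parametrization \eqref{PC}, that is $R=\e$ and $\mu\e = \ov\mu\,\e^{1-\alpha}$, $\mu\e^2 = \ov\mu\,\e^{2-\alpha}$. The starting point is that $\cE_\e(f_0,g_0)$ and $\cH_\e(f_0,g_0)$ must be controlled uniformly in $\e$: the energy is immediate since
\[
\cE_\e(f_0,g_0)=\tfrac12\|f_0\|_2^2+\tfrac{\e}{2}\|f_0+g_0\|_2^2\le \tfrac12\|f_0\|_2^2+\|f_0\|_2^2+\|g_0\|_2^2,
\]
which is finite because $(f_0,g_0)\in\cK^2\subset L_2(\R^d;\R^2)$; the entropy term $\int f_0\ln f_0$ is finite for $h\in\cK$ by a standard argument (split into $\{f_0\le1\}$ and $\{f_0>1\}$, using $L_1$, $L_2$ and the $|x|^2$-moment to control $\int_{\{f_0\le 1\}} f_0|\ln f_0|$), while $\mu(\e)^{-1}\int g_0\ln g_0 = \ov\mu^{-1}\e^\alpha \int g_0\ln g_0$, and since $\e^\alpha\le 1$ this is bounded; one has to be a little careful that $\cH_\e(f_0,g_0)$ could a priori be negative, but it is bounded below uniformly by the same splitting argument, so $\cH_\e(f_0,g_0)$ stays in a fixed compact interval.

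With those bounds in hand the individual items follow. For (i): inequality (b) gives $\|f_\e(t)\|_2^2 + \e\|h_\e(t)\|_2^2 \le 2\cE_\e(f_0,g_0)\le C$ for a.a.\ $t$, which is the $L_\infty(0,\infty;L_2)$ bound for $f_\e$ and $\sqrt\e\,h_\e$; the $L_1$-part is free since $f_\e(t),g_\e(t)\in\cK$, so $\|f_\e(t)\|_1=1$ and $\|\sqrt\e\,h_\e(t)\|_1=\sqrt\e(\|f_\e(t)\|_1+\|g_\e(t)\|_1)=2\sqrt\e\le 2$. For (ii): the dissipation term in (b) contains $\tfrac12 f_\e|\nabla((1+\e)f_\e+\e g_\e)|^2 = \tfrac12 f_\e|\nabla(f_\e+\e h_\e)|^2$, whose time-space integral is bounded by $\cE_\e(f_0,g_0)\le C$; this is exactly the claimed $L_2((0,\infty)\times\R^d)$ bound for $\sqrt{f_\e}\nabla(f_\e+\e h_\e)$. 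For (iii): inequality (a) gives $\int_0^t\!\!\int_{\R^d}\big[|\nabla f_\e|^2 + \e|\nabla h_\e|^2\big]\,dx\,ds \le \cH_\e(f_0,g_0) - \cH_\e(f_\e(t),g_\e(t))$; using the uniform lower bound on the entropy, which again follows from the $L_1$- and $|x|^2$-moment control on $f_\e(t)$ and $g_\e(t)$ (items (i) and (v)), the right-hand side is $\le C(1+t)$ or even $\le C$, giving the bounds for $\nabla f_\e$ and $\sqrt\e\,\nabla h_\e$ in $L_2((0,t)\times\R^d)$.

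For (iv): from the weak formulation \eqref{T1a}, $\p_t f_\e = \dv[f_\e\nabla(f_\e+\e h_\e)]$ in $(W_4^1)'$, so for $\xi\in W_4^1(\R^d)$ one estimates
\[
\Big|\int_{\R^d} \p_t f_\e\,\xi\,dx\Big| = \Big|\int_{\R^d} f_\e\nabla(f_\e+\e h_\e)\cdot\nabla\xi\,dx\Big| \le \big\|\sqrt{f_\e}\big\|_{L_4}\big\|\sqrt{f_\e}\nabla(f_\e+\e h_\e)\big\|_{L_2}\|\nabla\xi\|_{L_4},
\]
and $\|\sqrt{f_\e}\|_{L_4}^2 = \|f_\e\|_{L_2}\le C$ by (i); squaring and integrating in time, the factor $\|\sqrt{f_\e}\nabla(f_\e+\e h_\e)\|_{L_2}^2$ is integrable over $(0,\infty)$ by (ii), which yields the $L_2(0,\infty;(W_4^1)')$ bound. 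For (v): test \eqref{T1a} (formally) with $\xi(x)=|x|^2$ — rigorously, with a smooth compactly supported approximation and pass to the limit using the bounds already obtained — to get
\[
\int_{\R^d}|x|^2 f_\e(t)\,dx = \int_{\R^d}|x|^2 f_0\,dx + 2\int_0^t\!\!\int_{\R^d} f_\e\nabla(f_\e+\e h_\e)\cdot x\,dx\,ds,
\]
and bound the last integral by Cauchy–Schwarz as $2\int_0^t \|\sqrt{f_\e}\nabla(f_\e+\e h_\e)\|_{L_2}\,\big(\int |x|^2 f_\e\,dx\big)^{1/2}\,ds$; combined with the $L_2((0,\infty)\times\R^d)$ bound from (ii) and a Gronwall argument in the quantity $t\mapsto\int|x|^2 f_\e(t)\,dx$, this gives the uniform bound on $(0,t)$. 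The main obstacle I anticipate is the bookkeeping needed to make the entropy lower bound and the $|x|^2$-moment estimate fully rigorous — in particular justifying the use of the unbounded weight $|x|^2$ as a test function and closing the resulting (nonlinear, but Gronwall-type) differential inequality for the second moment — rather than any one estimate being deep; all the genuine analytic content is already packaged in the inequalities (a) and (b) of Theorem~\ref{T:1}.
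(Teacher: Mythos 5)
Your overall strategy coincides with the paper's: all five items are read off from the energy and entropy inequalities (a)--(b) of Theorem~\ref{T:1}; items (i), (ii) and (iv) are handled exactly as in the paper, and your Cauchy--Schwarz/Gronwall derivation of (v) from (ii), using the flux bound $\|\sqrt{f_\e}\nabla(f_\e+\e h_\e)\|_{L_2((0,\infty)\times\R^d)}\le C$, is a valid (slightly different) route to the second-moment bound for $f_\e$.

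There is, however, a genuine gap in your item (iii). To convert the entropy inequality (a) into a bound on $\int_0^t\int_{\R^d}[|\nabla f_\e|^2+\e|\nabla h_\e|^2]\,dx\,ds$ you need an upper bound on $-\mathcal{H}_\e(f_\e(t),g_\e(t))$, and the negative part of the entropy contains $\mu^{-1}\int g_\e(t)\ln g_\e(t)\,dx$; bounding this from below requires, besides $\|g_\e(t)\|_1=1$, control of the weighted second moment $\mu^{-1}\int g_\e(t)|x|^2\,dx$. You invoke ``items (i) and (v)'' for the entropy lower bound, but your (v) only concerns $f_\e$ --- nowhere in your argument is any moment of $g_\e$ estimated, so the lower bound is not closed. (Also, the resulting bound is $C(1+t)$, not $C$: the second moments grow linearly in time.) The paper circumvents this by testing \emph{both} equations with approximations of $|x|^2$ and adding them with weights $1$ and $\mu^{-1}$; the flux terms then combine exactly into $-d\int(f_\e^2+\e h_\e^2)\,dx$, yielding the identity behind \eqref{ME1}, i.e.\ $\int(f_\e(t)+\mu^{-1}g_\e(t))|x|^2\,dx\le C(1+t)$, which is precisely what the entropy lower bound (via \cite[Lemma~A.1]{LM13}) needs. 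Your approach can be repaired in the same spirit without the exact identity: test the $g_\e$-equation with $|x|^2$, estimate
\[
2\mu\e\int_{\R^d} g_\e\nabla h_\e\cdot x\,dx\le 2\sqrt{\mu}\,\big\|\sqrt{\mu}\,\e\sqrt{g_\e}\,\nabla h_\e\big\|_2\Big(\int_{\R^d} g_\e|x|^2\,dx\Big)^{1/2},
\]
use the dissipation term $\mu\e^{2}g_\e|\nabla h_\e|^2$ from Theorem~\ref{T:1}~(b), and run Gronwall to obtain $\mu^{-1}\int g_\e(t)|x|^2\,dx\le C(1+t)$ after dividing by $\mu\ge\ov\mu$. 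Once this is supplied, your proof is complete.
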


\begin{proof}
Since $\e\in (0,1)$ and
 $$
 \mathcal{E}_\e(f_0,g_0) \le \frac{\|f_0\|_2^2 + \|f_0+g_0\|_2^2}{2},
 $$
the estimates (i)-(ii) directly follow from the definition of $\mathcal{K}$ and the energy inequality, see Theorem~\ref{T:1}~(b).
 
In order to prove (iii), let us consider, for  each $n\in\N$, the  function $\xi_n:\R^d\to\R$ defined by
 $$
 \xi_n(x):=
 \left\{
 \begin{array}{clll}
 |x|^2&,& |x|\leq n,\\
 4n|x|-|x|^2-2n^2&,& n\leq |x|\leq 2n,\\
 2n^2&,& |x|\geq 2n.
 \end{array}
 \right. 
 $$
 We point out that $\xi_n-2n^2$ is continuously differentiable and has compact support, hence it can be approximated in the $W^1_\infty$-norm by functions in $C_c^\infty(\R)$.
Moreover, the properties of the solutions to \eqref{Pe}-\eqref{BC} listed in Theorem~\ref{T:1}  enable us to show that 
\begin{align*}
 & \lim_{n\to\infty} \int_{\R^d} f_\e(t,x)\xi_n(x)\, dx = \int_{\R^d} f_\e(t,x)|x|^2\, dx,\\[1ex] 
 & \lim_{n\to\infty} \int_{\R^d} g_\e(t,x)\xi_n(x)\, dx =  \int_{\R^d} g_\e(t,x)|x|^2\, dx,\\[1ex]
 & \lim_{n\to\infty} \int_0^t\int_{\R^d} f_\e  \nabla(f_\e+\e h_\e )\cdot  \nabla\xi_n\, dx\, ds = 2\int_0^t\int_{\R^d} f_\e  \nabla( f_\e+\e h_\e ) \cdot x\, dx,\\[1ex]
 & \lim_{n\to\infty} \int_0^t \int_{\R^d} g_\e  \nabla h_\e\cdot \nabla \xi_n\, dx\, ds = 2\int_0^t \int_{\R^d} g_\e  \nabla h_\e \cdot x\, dx\, ds,\qquad t\geq0.
\end{align*}
Hence, using $\xi_n-2n^2$, $n\in\N$, as test functions in \eqref{T1}, these convergences   yield in the limit~${n\to\infty}$ that 
 \begin{align*} 
  \int_{\R^d}\Big(f_\e(t)+\frac{1}{\mu} g_\e(t)\Big)|x|^2\, dx- d\int_0^t\int_{\R^d} \left( f_\e^2+\e h_\e^2 \right)\, dx\, ds=\int_{\R^d}\Big(f_0+\frac{1}{\mu} g_0\Big)|x|^2\, dx,\quad t\geq0.
 \end{align*}
In particular, it follows from \eqref{PC} and Theorem~\ref{T:1}~(b) that there exists a constant~${C=C(f_0,g_0,\ov\mu)>0}$ such that 
 \begin{align}\label{ME1}
  \int_{\R^d}\Big( f_\e(t) + \frac{1}{\mu} g_\e(t)\Big) |x|^2\, dx\leq C(1+t),\quad t\geq0.
 \end{align}
 Taking advantage of \cite[Lemma~A.1]{LM13}, we find a positive universal constant $C_E$ such that
 $$
 -\mathcal{H}_\e(f_\e(t),g_\e(t))\leq C_E \Big(1+\frac{1}{\mu}\Big)+\int_{\R^d}\Big(f_\e(t)+\frac{1}{\mu} g_\e(t)\Big)|x|^2\, dx,\qquad t\geq0,
 $$
 which, together with \eqref{ME1} and the entropy estimate  in Theorem~\ref{T:1}~(a), shows that 
 \begin{equation*}
 \int_0^t\int_{\R^d} \big [ |\nabla f_\e|^2 + \e |\nabla h_\e|^2  \big]\, dx\, ds\leq \mathcal{H}_\e(f_0,g_0) + C(1+t),\qquad t\geq0.
 \end{equation*}
Since $(f_0,g_0)\in\cK^2$ and $r\ln{r}\le r+r^2$ for $r\ge 0$, we conclude, together with  \eqref{PC}, that there exists a constant~${C=C(f_0,g_0,\ov\mu)>0}$ with the property that
 \begin{equation}\label{eq:DED}
 \int_0^t\int_{\R^d} \big [ |\nabla f_\e|^2 + \e |\nabla h_\e|^2  \big]\, dx\, ds\leq  C(1+t),\qquad t\geq0,
 \end{equation}
and (iii) follows from the above inequality.

Next, a classical consequence of Theorem~\ref{T:1} (see Lemma~\ref{L:43} below for a related result) ensures that $(f_\e,g_\e)$ solves~\eqref{Pe} in  distributional sense; that is,
 \begin{equation}\label{DS}
 \p_t f_\e=\mathrm{div} J_{f_\e}\quad\text{and}\quad \p_t g_\e=\mathrm{div}  J_{g_\e}\qquad\text{in $\mathcal{D}'((0,\infty)\times\R^d)$,}
 \end{equation}
 where the fluxes $J_{f_\e}$ and $J_{g_\e}$ are given by
 \[
 J_{f_\e}:=f_\e  \nabla( f_\e+\e h_\e )\quad\text{and}\quad J_{g_\e}:=\mu\e g_\e   \nabla h_\e.
 \]
The estimates (i)-(ii) from Theorem~\ref{T:1}, along with H\"older's inequality, lead us to
\begin{align*}
 \|J_{f_\e}\|_{L_2(0,\infty;L_{4/3}(\R^d))}^2&=\int_0^\infty\Big(\int_{\R^d} \big|   f_\e \nabla(f_\e+\e h_\e)\big|^{4/3}\, dx\Big) ^{3/2}\, dt\\[1ex]
 &= \int_0^\infty\Big(\int_{\R^d} \big|  f_\e^2\big|^{1/3} \big| \sqrt{f_\e}\nabla(f_\e+\e h_\e)\big|^{4/3}\, dx\Big) ^{3/2}\, dt\\[1ex]
 &\leq \int_0^\infty \|f_\e\|_2 \left\|  \sqrt{f_\e}\nabla( f_\e+\e h_\e) \right\|_2^2 \, dt\\[1ex]
 &\leq   \|f_\e\|_{L_\infty(0,\infty; L_2(\R^d))} \left\|\sqrt{f_\e}\nabla( f_\e+\e h_\e) \right\|_{L_2((0,\infty)\times \R^d)}^2,
\end{align*}
which shows that $( J_{f_\e})_{\e\in(0,1)} $ is bounded in $L_2(0,\infty; L_{4/3}(\R^d))$. 
 This property  immediately implies~(iv) by a duality argument.
 
Finally, the bound~(v) is a straightforward consequence of  \eqref{ME1}.
 \end{proof}

The next step is the continuity and compactness of some embeddings involving weighted~${\text{$L_p$-spaces}}$, 
which will serve when establishing the convergence of the family $(f_\e)_{\e\in (0,1)}$ (along a suitable sequence~$\e_k\to 0$).

\begin{lemma}\label{L:AL1}
	\phantom{a}
	
\begin{itemize}
\item[(i)] Given $p\in(2,\infty]$, the embedding $L_p(\R^d)\cap L_1(\R^d, |x|^2dx)\hookrightarrow L_2(\R^d, |x|^{\frac{2(p-2)}{p-1}}dx)$ is continuous. \\[-1ex]
\item[(ii)] The embedding $H^1(\R^d)\cap L_1(\R^d, |x|^2dx)\hookrightarrow L_1(\R^d)\cap L_2(\R^d)$ is compact.
\end{itemize}
\end{lemma}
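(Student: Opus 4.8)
The plan is to prove the two embedding statements of Lemma~\ref{L:AL1} separately, both by elementary interpolation and truncation arguments.

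\textbf{Part (i): the weighted interpolation embedding.}
First I would fix $p\in(2,\infty]$, abbreviate $\theta:=\tfrac{2(p-2)}{p-1}\in[0,2)$, and let $h\in L_p(\R^d)\cap L_1(\R^d,|x|^2dx)$. The idea is to write the integrand $|h|^2|x|^\theta$ as a product and apply H\"older's inequality so that the exponents match $p$ on the $h$-factor and $1$ (against the measure $|x|^2dx$) on the remaining factor. Concretely, split $|h|^2 = |h|^{2-\beta}\cdot|h|^{\beta}$ and $|x|^\theta = |x|^{0}\cdot|x|^{\theta}$, and choose the conjugate exponents $q$ and $q'$ with $q=\tfrac{p}{2-\beta}$. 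One then needs $\beta q' = 1$ and $\theta q' = 2$, which forces $\theta = 2\beta$, i.e. $\beta = \tfrac{p-2}{p-1}$; a quick check shows $2-\beta = \tfrac{p}{p-1}>0$ and $q' = \tfrac{1}{\beta} = \tfrac{p-1}{p-2}$, which is the conjugate exponent of $q = \tfrac{p}{2-\beta} = p-1$ (when $p=\infty$ one simply uses $\|h\|_\infty$ in place of the $L_p$-factor). Thus
\begin{equation*}
\int_{\R^d} |h(x)|^2 |x|^\theta\, dx \le \Big(\int_{\R^d}|h|^{(2-\beta)q}\,dx\Big)^{1/q}\Big(\int_{\R^d}|h|^{\beta q'}|x|^{\theta q'}\,dx\Big)^{1/q'} = \|h\|_p^{2-\beta}\,\Big(\int_{\R^d}|h|\,|x|^2\,dx\Big)^{\beta},
\end{equation*}
which is exactly the asserted continuous embedding (with $\|h\|_{L_2(|x|^\theta dx)}\le\|h\|_p^{1-\beta/2}\|h\|_{L_1(|x|^2dx)}^{\beta/2}$).

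\textbf{Part (ii): the compact embedding.}
Let $(h_n)_n$ be a bounded sequence in $X:=H^1(\R^d)\cap L_1(\R^d,|x|^2dx)$; I must extract a subsequence converging in $L_1(\R^d)\cap L_2(\R^d)$. The standard two-step scheme applies. First, \emph{tightness}: for any $R>0$, Chebyshev against the weight gives $\int_{|x|>R}|h_n|\,dx \le R^{-2}\int|h_n|\,|x|^2dx \le CR^{-2}$, so the mass outside large balls is uniformly small, and similarly $\int_{|x|>R}|h_n|^2\,dx$ is controlled after an additional H\"older/Gagliardo--Nirenberg step using the $H^1$-bound (interpolating $L^2$ on the exterior region between $L^1$ there and the global $L^{2^*}$-bound from Sobolev embedding). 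Second, \emph{local compactness}: on each ball $B_R$, $H^1(B_R)\hookrightarrow L^2(B_R)$ and $H^1(B_R)\hookrightarrow L^1(B_R)$ compactly by Rellich--Kondrachov, so by a diagonal argument over $R=1,2,3,\dots$ one obtains a subsequence converging in $L^1_{\mathrm{loc}}$ and $L^2_{\mathrm{loc}}$. Combining the uniform tail smallness with the local convergence (a standard $\varepsilon/3$ argument) yields convergence in $L^1(\R^d)$ and $L^2(\R^d)$, proving compactness.

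\textbf{Main obstacle.}
The genuinely delicate point is controlling the $L^2$-tails in part (ii): the weight $|x|^2$ only directly controls the $L^1$-tails, so one must borrow integrability from the Sobolev embedding. In low dimensions ($d=1,2$) the exponent $2^*$ is large or infinite and the interpolation $\int_{|x|>R}|h_n|^2 \le \big(\int_{|x|>R}|h_n|\big)^{a}\big(\int|h_n|^{2^*}\big)^{b}$ (with suitable $a,b$, using $d\le 4$ so that $2^*\ge 2$, together with $d=1,2$ handled via $H^1\hookrightarrow L^\infty$ or $H^1\hookrightarrow L^q$ for all finite $q$) is routine; I would just make sure the bookkeeping of exponents is uniform in $n$. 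Everything else — the H\"older computation in (i), Rellich--Kondrachov, the diagonal extraction — is entirely standard.
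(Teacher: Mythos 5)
Your argument is correct and follows essentially the same route as the paper: part~(i) is the identical H\"older splitting (the paper's inequality~\eqref{X1} is exactly your estimate with $\beta=\tfrac{p-2}{p-1}$), and part~(ii) is the same scheme of Rellich--Kondrachov on balls, a Cantor diagonal extraction, and uniform tail control obtained by combining the $|x|^2$-weight with a Sobolev bound (the paper feeds a subcritical $L_p$-bound into part~(i), while you interpolate between $L_1$ on the exterior region and $L_{2^*}$ --- a cosmetic difference). One small correction: the restriction $d\le 4$ you invoke for the $L_2$-tail estimate is spurious, since $2^*=\tfrac{2d}{d-2}>2$ for every $d\ge 3$ (and $H^1(\R^d)\hookrightarrow L_q(\R^d)$ for some $q>2$ when $d=1,2$), so your interpolation works in all dimensions --- as it must, because the lemma is needed for arbitrary $d\ge 1$.
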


\begin{proof}
The claim~(i) with $p=\infty$ is obvious.  For $p\in(2,\infty)$, H\"older's inequality leads us to
\begin{equation}\label{X1}
\int_{\R^d} |f(x)|^2|x|^{\frac{2(p-2)}{p-1}}dx\leq \Big(\int_{\R^d}|f(x)|\, |x|^{2}\,dx\Big)^{\frac{p-2}{p-1}}\|f\|_p^{\frac{p}{p-1}}
\end{equation}
and (i) follows.

With respect to (ii), let $(f_n)_{n\ge 1}$ be a bounded sequence in $H^1(\R^d)\cap L_1(\R^d, |x|^2dx)$ and set 
\[
	M := \sup_{n\ge 1}\big\{ \|f_n\|_{H^1} + \|f_n\|_{L_1(\R^d,|x|^{2}\,dx)}\big\}.
\]
Owing to the compactness of the embedding of $H^1(B_R(0))$ in $L_2(B_R(0))$ for any $R>0$, where we set~${B_R(0):=\{x\in\R^d\,:\,\ |x|<R\}}$, it follows from a standard Cantor diagonal procedure that there exist a function~${f\in H^1(\R^d)\cap L_1(\R^d, |x|^2dx)}$ and a subsequence  of $(f_n)_{n\ge 1}$ (not relabeled) such that~${f_n\to f}$  in $L_2(B_R(0))$ for all $R>0$ and
\[
	\|f\|_{H^1} + \|f\|_{L_1(\R^d,|x|^{2}\,dx)}\le M.
\]
Choosing $p\in (2,\infty)$ such that~${p<2d/(d-2)}$ when~$d\geq 3$, Sobolev's embedding and \eqref{X1} then lead us to
\begin{align*} 
\|f_n-f\|_2^2 &\leq \int_{B_R(0)}|f_n(x)-f(x)|^2\,dx+ R^{-\frac{2(p-2)}{p-1}}\int_{\{|x|>R\}}|f_n(x)-f(x)|^2|x|^{\frac{2(p-2)}{p-1}}\,dx\\[1ex]
&\leq \|f_n-f\|^2_{L_2(B_R(0))}+R^{-\frac{2(p-2)}{p-1}} \|f_n-f\|_{L_1(\R^d,|x|^{2}\,dx)}^{\frac{p-2}{p-1}}\|f_n-f\|_p^{\frac{p}{p-1}} \\
&\leq \|f_n-f\|^2_{L_2(B_R(0))}+CR^{-\frac{2(p-2)}{p-1}} (2M)^{\frac{p-2}{p-1}}\|f_n-f\|_{H^1}^{\frac{p}{p-1}} \\
&\leq \|f_n-f\|^2_{L_2(B_R(0))}+ C M^2R^{-\frac{2(p-2)}{p-1}} 
\end{align*}
for all $n\in\N$ and $R>1$. Letting first $n\to\infty$ and then $R\to\infty$, we deduce that $f_n\to f$ in $L_2(\R^d)$. Finally, for $R>1$,
\begin{align*}
\|f_n-f\|_1& \leq  \|f_n-f\|_{L_1(B_R(0))} + R^{-2}\|f_n-f\|_{L_1(\R^d,|x|^{2}\,dx)} \\
& \leq \sqrt{|B_R(0)|} \|f_n-f\|_{L_2(B_R(0))} + 2MR^{-2}.
\end{align*}
Arguing as before completes the proof of~(ii).
\end{proof}

We now use the  estimates derived in Lemma~\ref{L:1} and the previous result to  deduce the following convergences.

\begin{lemma}\label{L:2} 
Let $d\geq 1$, $\alpha\in[0,\infty)$,  and assume \eqref{PC}.
 There exists a sequence $(\e_k)_{k\ge 1}\subset(0,1)$ with~$\e_k\to 0$ and a function~${f\in L_2(0,t; H^1(\R^d))}\cap L_\infty(0,\infty;L_2(\R^d))$ for all $t>0$ such that
 \begin{itemize}
  \item[(i)] $f_{\e_k}\to f$ in $L_2((0,t)\times \R^d)$ and in $C([0,t], (W^1_4(\R^d))')$ for all $t>0$;
  \item[(ii)] $\nabla f_{\e_k}\rightharpoonup \nabla f$  in $L_2((0,t)\times \R^d)$ for all $t>0$;
   \item[(iii)] $\p_t f_{\e_k}\rightharpoonup \p_t f$ in $L_2(0,\infty;(W^1_4(\R^d))');$ 
   \item[(iv)] $\sqrt{f_{\e_k}}\nabla(f_{\e_k}+\e_k h_{\e_k})\rightharpoonup \sqrt{f}\nabla f$ in $L_{2}((0,t)\times\R^d) $  for all $t>0$.
 \end{itemize}
\end{lemma}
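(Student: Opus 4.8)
The plan is to extract convergent subsequences from the bounds in Lemma~\ref{L:1} by combining weak compactness in Hilbert spaces with the Aubin--Lions--Simon compactness theorem, and then to identify the limit of the nonlinear flux term using an Egorov/almost-everywhere argument.

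\medskip

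\textbf{Step 1: strong compactness of $(f_\e)$.} By Lemma~\ref{L:1}(iii),(v), the family $(f_\e)_{\e\in(0,1)}$ is bounded in $L_2(0,t;H^1(\R^d)\cap L_1(\R^d,|x|^2dx))$, and by Lemma~\ref{L:1}(iv), $(\p_t f_\e)_{\e\in(0,1)}$ is bounded in $L_2(0,\infty;(W^1_4(\R^d))')$. Lemma~\ref{L:AL1}(ii) provides the compact embedding $H^1(\R^d)\cap L_1(\R^d,|x|^2dx)\hookrightarrow L_2(\R^d)$, which also embeds continuously into $(W^1_4(\R^d))'$. Hence \cite[Corollary~4]{Si87} applies on each time interval $(0,t)$ and yields, after a Cantor diagonal extraction over $t\in\N$, a sequence $\e_k\to 0$ and a limit $f$ with $f_{\e_k}\to f$ in $C([0,t];(W^1_4(\R^d))')$ and, using the interpolation inequality implicit in the $L_2(0,t;H^1)$ bound together with the uniform-in-time $L_2$ bound from Lemma~\ref{L:1}(i), also $f_{\e_k}\to f$ in $L_2((0,t)\times\R^d)$ for every $t>0$. (More directly: Simon's theorem with intermediate space $L_2(\R^d)$ gives $L_2(0,t;L_2(\R^d))$ strong convergence outright.) This proves~(i).

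\medskip

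\textbf{Step 2: weak limits.} The $L_\infty(0,\infty;L_2(\R^d))$ bound from Lemma~\ref{L:1}(i) places $f$ in that space; the $L_2(0,t;H^1)$ bound from Lemma~\ref{L:1}(iii) lets us pass to a further (not relabeled) subsequence so that $\nabla f_{\e_k}\rightharpoonup G$ weakly in $L_2((0,t)\times\R^d)$ for all $t$, and the already established strong $L_2$ convergence of $f_{\e_k}$ identifies $G=\nabla f$ in the distributional sense, giving~(ii). Likewise Lemma~\ref{L:1}(iv) gives a weak limit of $(\p_t f_{\e_k})$ in $L_2(0,\infty;(W^1_4)')$, which must equal $\p_t f$ since $f_{\e_k}\to f$ in $C([0,t];(W^1_4)')$; this is~(iii).

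\medskip

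\textbf{Step 3: the nonlinear flux, which is the main obstacle.} By Lemma~\ref{L:1}(ii) the family $(\sqrt{f_\e}\nabla(f_\e+\e h_\e))_{\e\in(0,1)}$ is bounded in $L_2((0,\infty)\times\R^d)$, so after one more extraction it converges weakly to some $\Phi\in L_2((0,t)\times\R^d)$ for all $t$; the task is to show $\Phi=\sqrt{f}\nabla f$. The difficulty is that weak convergence of $\sqrt{f_{\e_k}}$ and of $\nabla(f_{\e_k}+\e_k h_{\e_k})$ does not directly pass to the product. I would argue as follows. First, $\sqrt{f_{\e_k}}\to\sqrt{f}$ strongly in $L_2((0,t)\times\R^d)$ (and, up to a subsequence, a.e.): this follows from $f_{\e_k}\to f$ in $L_2$ (hence a.e.\ along a subsequence) together with the uniform $L_\infty(0,\infty;L_2)$ bound and dominated/Vitali convergence, or from the elementary inequality $|\sqrt a-\sqrt b|^2\le|a-b|$. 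Next, I claim $\e_k h_{\e_k}\to 0$ strongly enough to be negligible: by Lemma~\ref{L:1}(i), $\sqrt{\e_k}h_{\e_k}$ is bounded in $L_\infty(0,\infty;L_2)$, and by Lemma~\ref{L:1}(iii), $\sqrt{\e_k}\nabla h_{\e_k}$ is bounded in $L_2((0,t)\times\R^d)$; hence $\e_k\nabla h_{\e_k}=\sqrt{\e_k}\cdot\sqrt{\e_k}\nabla h_{\e_k}\to 0$ in $L_2((0,t)\times\R^d)$, so $\nabla(f_{\e_k}+\e_k h_{\e_k})-\nabla f_{\e_k}\to 0$ in $L_2$, and therefore $\nabla(f_{\e_k}+\e_k h_{\e_k})\rightharpoonup\nabla f$ weakly in $L_2((0,t)\times\R^d)$ by Step~2. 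Finally, to pass to the limit in the product $\sqrt{f_{\e_k}}\cdot\nabla(f_{\e_k}+\e_k h_{\e_k})$ (strong times weak), note that $\sqrt{f_{\e_k}}$ is only bounded in $L_\infty(0,\infty;L_2)$, not in $L_\infty$, so I must be careful: for any $\psi\in C_c^\infty((0,t)\times\R^d)$, write $\int\sqrt{f_{\e_k}}\nabla(f_{\e_k}+\e_k h_{\e_k})\cdot\psi = \int\nabla(f_{\e_k}+\e_k h_{\e_k})\cdot(\sqrt{f_{\e_k}}\psi)$; since $\sqrt{f_{\e_k}}\psi\to\sqrt{f}\psi$ strongly in $L_2((0,t)\times\R^d)$ (by a.e.\ convergence plus the $\psi$-support giving domination through the $L_\infty(0,\infty;L_2)$ bound and Vitali), and $\nabla(f_{\e_k}+\e_k h_{\e_k})\rightharpoonup\nabla f$ weakly in $L_2$, the product converges to $\int\nabla f\cdot\sqrt{f}\psi=\int\sqrt{f}\nabla f\cdot\psi$. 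By density of $C_c^\infty$ and the uniform $L_2$ bound on $\sqrt{f_{\e_k}}\nabla(f_{\e_k}+\e_k h_{\e_k})$, this identifies the weak-$L_2$ limit $\Phi=\sqrt{f}\nabla f$ and proves~(iv). I expect the most delicate point to be justifying the strong $L_2$ convergence $\sqrt{f_{\e_k}}\psi\to\sqrt f\psi$ cleanly, i.e.\ upgrading a.e.\ convergence to $L_2$ convergence on $\mathrm{supp}\,\psi$; this is handled by Vitali's theorem since $\{|\sqrt{f_{\e_k}}\psi|^2\le\|\psi\|_\infty^2 f_{\e_k}\}$ and $(f_{\e_k})$ is uniformly integrable on bounded sets (being bounded in $L_2$), or simply by dominated convergence after passing to a further a.e.-convergent subsequence of $f_{\e_k}$ and using $|\sqrt{f_{\e_k}}-\sqrt f|^2\le|f_{\e_k}-f|\to 0$ in $L_1$.
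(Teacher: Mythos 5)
Your proposal is correct and follows essentially the same route as the paper: Simon's compactness theorem with the compact embedding of Lemma~\ref{L:AL1}(ii) for (i), identification of weak limits via the strong convergence for (ii)--(iii), and a strong-times-weak argument for the flux in (iv), where you first dispose of $\e_k\nabla h_{\e_k}$ exactly as the paper does. The only cosmetic difference is in (iv): the paper upgrades $f_{\e_k}\to f$ in $L_2$ to $\sqrt{f_{\e_k}}\to\sqrt f$ in $L_4((0,t)\times\R^d)$ (via $|\sqrt a-\sqrt b|^4\le|a-b|^2$) and identifies the product limit through weak convergence in $L_{4/3}$, whereas you localize with a test function $\psi\in C_c^\infty$ and use strong $L_2$ convergence of $\sqrt{f_{\e_k}}\psi$ on $\supp\psi$; both are valid, and your localized version neatly sidesteps the global $L_1$ convergence that your unlocalized claim ``$\sqrt{f_{\e_k}}\to\sqrt f$ in $L_2((0,t)\times\R^d)$'' would otherwise require.
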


\begin{proof} Let $t>0$ be fixed.
 In view of Lemma~\ref{L:1}~(i),~(iii), and~(v), we obtain the boundedness of $(f_\e )_{\e\in(0,1)}$ in~${L_2(0,t;  H^1(\R^d))}$ and in $L_\infty(0,t; L_1(\R^d, |x|^2\, dx)) $, as well as that of $(\p_t f_\e )_{\e\in(0,1)}$ in $L_2(0,t;  (W^1_4(\R^d))'  )$. Hence, the family~$( f_\e )_{\e\in(0,1)}$ is  bounded in $L_2(0,t;   H^1(\R^d)\cap L_1(\R^d, |x|^2\,  dx))$. Since 
$$
H^1(\R^d)\cap L_1(\R^d, |x|^2\,  dx)\hookrightarrow L_1(\R^d)\cap L_2(\R^d)\hookrightarrow(W^1_4(\R^d))'
$$
and the first embedding is compact according to Lemma~\ref{L:AL1}~(ii), we infer from a classical compactness result, see \cite[Corollary~4]{Si87}, and a Cantor diagonal argument that there exist a sequence $\e_k\to0$ and a function $f$ such that ${f_{\e_k}\to f}$ in $L_2((0,t)\times \R^d)$ and in $C([0,t], (W^1_4(\R^d))')$ for all $t>0$. Note that  Lemma~\ref{L:1}~(i) now immediately implies that~$f\in  L_\infty(0,\infty;L_1(\R^d)\cap L_2(\R^d))$. 
 
Next, the convergence~(ii)  is a straightforward consequence of  Lemma~\ref{L:1}~(iii) and the just established Lemma~\ref{L:2}~(i), from which we also deduce that ${f\in L_2(0,t; H^1(\R^d))} $ for all $t>0$.
  
 Recalling Lemma~\ref{L:1}~(iv), the convergence $\p_tf_{\e_k}\rightharpoonup \p_tf$  in $L_2(0,t;(W^1_4(\R))')$  
 (after possibly extracting a further subsequence) follows  from Lemma~\ref{L:2}~(i) and the reflexivity of~${L_2(0,t;(W^1_4(\R^d))')}$. 
 
 With respect to (iv), due to  Lemma~\ref{L:1} (ii), we may assume that  there is $\mathbf{F}\in L_2((0,\infty)\times\R^d)$ such that
 \begin{equation}\label{X2}
 	\text{$\sqrt{f_{\e_k}}\nabla (f_{\e_k}+\e_kh_{\e_k}) \rightharpoonup \mathbf{F}$ \qquad in $L_2((0,\infty)\times\R^d)$.} 
 \end{equation}
Moreover,  the bounds in Lemma \ref{L:1}~(iii), along with Lemma~\ref{L:2}~(ii),    imply that   
  \[
 \text{$\nabla (f_{\e_k}+\e_kh_{\e_k})\rightharpoonup \nabla f$ \qquad in $L_2((0,t)\times\R^d),$}
 \]
 while Lemma~\ref{L:2}~(i) leads us to
 \begin{align}\label{CL4}
 \text{$\sqrt{f_{\e_k}}\to \sqrt{f}$ \qquad in $ L_4((0,t)\times\R^d).$}
 \end{align}
 Combining the last two convergences we find
 \[
 \text{$\sqrt{f_{\e_k}}\nabla(f_{\e_k}+\e_k h_{\e_k})\rightharpoonup \sqrt{f}\nabla f$ in $L_{4/3}((0,t)\times\R^d)$.}
 \]
Recalling \eqref{X2}, we conclude that $\mathbf{F} = \sqrt{f}\nabla f$ and therewith establish~(iv).
 \end{proof}

We are now in a position to prove Theorem~\ref{MT1}.
 
 \begin{proof}[Proof of Theorem~\ref{MT1}]
 Let $(\e_k)_k\subset(0,1)$ and $f$  be as found in Lemma~\ref{L:2}.
As a direct consequence of Lemma~\ref{L:1}~(v) and Lemma~\ref{L:2}~(i), which in particular implies that  $f_{\e_k}(t)\to f(t)$ in $L_2(\R^d)$ for almost all $t>0$,
we deduce that $f\in   L_\infty(0,t;L_1(\R^d,(1+|x|^2) dx))$ for all $t>0$.
Taking also into account that~${f\in C([0,\infty), (W^1_4(\R^d))'),}$ it follows that $f(t)\geq 0$  a.e. in $\R^d$ for all $t\geq0.$ Moreover, in view of Lemma~\ref{L:2}~(i) and~(iv), it is straightforward to pass to the limit $k\to \infty$ in \eqref{T1a} with~${\e=\e_k}$ and obtain
\begin{equation*}
	\int_{\R^d} f(t)  \xi\, dx-\int_{\R^d} f_0 \xi\, dx+\int_0^t\int_{\R^d} f \nabla f \cdot  \nabla\xi\, dx\, ds=0 
\end{equation*}
for all $\xi\in C^\infty_c(\R^d)$ and all $t\ge 0$, thereby establishing \eqref{PMES}.

Finally, choosing~${\zeta_n:\R^d \to\R}$ with
\[
\zeta_n(x):= 
 \left\{
 \begin{array}{clll}
1&,& |x|\leq n,\\
  n+1-|x|&,& n\leq |x|\leq n+1,\\
   0&,&   |x|\geq n+1,
 \end{array}
 \right. \qquad n\in\N,
\]
as test function in \eqref{PMES}, we find that~${\|f(t)\|_1= \|f_0\|_1=1}$ for all~$t\geq0$. 
Therefore $f(t)\in\cK$ for all~$t\ge 0$.
 \end{proof}

\section{Estimating   the error $\|f_\e(t)-f(t)\|_{H^{-1}}$}\label{Sec:3}

In this section we restrict our arguments to  the case when the space dimension satisfies~$d\leq 4$. The main goal is to provide an estimate  for the error  $\|f_\e(t)-f(t)\|_{H^{-1}}$ with $t\geq0$, cf. Theorem~\ref{MT2}.
In particular, we also prove that the porous medium equation  \eqref{PME} with initial data $f_0\in\cK$ has a unique solution in the sense of Theorem~\ref{MT1}.
Hence, this improves Theorem~\ref{MT1} in the sense that now the whole family $(f_\e)_{\e\in(0,1)}$ converges for $\e\to0$ towards the corresponding solution to the porous medium equation. In order to prepare the proof of Theorem~\ref{MT2}, which we postpone to the end of the section, we first introduce some notation. We recall that  $1-\Delta:W^2_p(\R^d)\to L_p(\R^d)$ is an isomorphism for all $p\in (1,\infty)$ and 
$$
\|f\|_{H^{-1}}= \|(1-\Delta)^{-1}[f]\|_{H^1} \qquad \text{for all $f\in L_2(\R^d)$}.
$$
Theorem~\ref{MT2} then amounts to estimate the norm $\|F_\e(t)-F(t)\|_{H^{1}}$, where 
\[
F_\e:=(1-\Delta)^{-1}[f_\e]\qquad \text{and}\qquad F:=(1-\Delta)^{-1}[f].
\] 
Testing the equations \eqref{T1a} and \eqref{PMES} by $(1-\Delta)^{-1}[\xi]$, with $\xi\in C^\infty_c(\R^d)$ and using the self-adjointness of $(1-\Delta)^{-1}$, we arrive  at
\begin{align*}
\int_{\R^d} F_\e(t)  \xi\, dx & - \int_{\R^d} F_0 \xi\, dx \\
& + \frac{1}{2}\int_0^t\int_{\R^d} \Big[ f_\e^2\xi -\xi(1-\Delta)^{-1}[f_\e^2] +2\e(1-\Delta)^{-1}[f_\e\nabla h_\e]\cdot \nabla\xi \Big] \, dx\, ds =0,\\[1ex]
\int_{\R^d} F(t)  \xi\, dx & -\int_{\R^d} F_0 \xi\, dx+  \frac{1}{2} \int_0^t\int_{\R^d}      \Big[ f^2\xi -\xi(1-\Delta)^{-1}[f^2] \Big] \, dx\, ds =0,
\end{align*}
where  $F_0:=(1-\Delta)^{-1}[f_0].$
Hence, letting 
$$d_\e=f_\e-f\qquad\text{and}\qquad D_\e:=F_\e-F=(1-\Delta)^{-1}[d_\e],$$
 we have $d_\e(0)=D_\e(0)=0$ and, subtracting the above identities, we  deduce that
\begin{equation}\label{II1}
\begin{split}
\int_{\R^d} D_\e(t) \xi\, dx & + \frac{1}{2} \int_0^t\int_{\R^d} \Big[ d_\e (f_\e+f) \xi  -\xi(1-\Delta)^{-1}[d_\e(f_\e+f)] \Big]\, dx\, ds  \\[1ex]
& + \e \int_0^t\int_{\R^d} (1-\Delta)^{-1}[f_\e\nabla h_\e]\cdot \nabla\xi \, dx\, ds =0.
\end{split}
\end{equation}

As a consequence of~\eqref{II1}, Theorem~\ref{T:1}, and Theorem~\ref{MT1}, we obtain  the following result.

\begin{lemma}\label{L:43}
 Let $t>0$ and $\xi\in C_c^\infty([0,t]\times\R^d)$. Then
 \begin{equation}\label{II2}
 \begin{aligned}
  &\hspace{-0.25cm}\int_{\R^d} D_\e(t)\xi(t)\, dx-\int_0^t\int_{\R^d} D_\e \p_t\xi\, dx\, ds\\[1ex]
  &=- \frac{1}{2} \int_0^t\int_{\R^d} \Big[ d_\e (f_\e+f) \xi  -\xi(1-\Delta)^{-1}[d_\e (f_\e+f)] + 2\e(1-\Delta)^{-1}[f_\e\nabla h_\e]\cdot \nabla\xi\Big]\, dx\, ds
 \end{aligned}
 \end{equation}
 and
  \begin{equation}\label{II3}
  \int_{\R^d} d_\e(t)\xi(t)\, dx -\int_0^t\int_{\R^d} d_\e \p_t\xi\, dx\, ds= \int_0^t\int_{\R^d} \Big[ f \nabla f - f_\varepsilon \nabla (f_\e +\e h_\e) \Big]\cdot \nabla\xi\, dx\, ds
 \end{equation}
 for all $\e\in(0,1)$.
\end{lemma}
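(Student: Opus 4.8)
The plan is to establish the two identities \eqref{II2} and \eqref{II3} by a standard density/approximation argument, upgrading the fixed-test-function identities already available (namely \eqref{T1a}, \eqref{PMES}, and their combination \eqref{II1}) to time-dependent test functions $\xi\in C_c^\infty([0,t]\times\R^d)$. First I would treat \eqref{II3}, which is the cleaner of the two. Recall that $f_\e$ satisfies \eqref{T1a} and $f$ satisfies \eqref{PMES}; both hold for all $x$-dependent $\xi\in C_c^\infty(\R^d)$ and all times $t\ge 0$. Subtracting, $d_\e=f_\e-f$ satisfies, for every spatial test function $\psi\in C_c^\infty(\R^d)$ and every $s\ge 0$,
\[
\int_{\R^d} d_\e(s)\psi\, dx = \int_0^s\int_{\R^d}\Big[f\nabla f - f_\e\nabla(f_\e+\e h_\e)\Big]\cdot\nabla\psi\, dx\, d\sigma .
\]
In particular $d_\e\in C([0,\infty),(W^1_4(\R^d))')$ (by Theorem~\ref{T:1}(ii) and \eqref{X0}) with $d_\e(0)=0$, and one reads off that $\p_t d_\e=\dv J$ in $\D'((0,\infty)\times\R^d)$ with $J:=f\nabla f - f_\e\nabla(f_\e+\e h_\e)\in L_2(0,t;L_{4/3}(\R^d))$ thanks to Lemma~\ref{L:1}(ii) and the bound $\sqrt f\nabla f\in L_2((0,t)\times\R^d)$ from \eqref{X0} (Hölder, exactly as in the estimate of $J_{f_\e}$ in the proof of Lemma~\ref{L:1}). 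For a time-dependent test function $\xi\in C_c^\infty([0,t]\times\R^d)$ one then applies the product/integration-by-parts-in-time rule for the pairing between $C([0,t],(W^1_4)')\cap H^1(0,t;(W^1_4)')$-type functions and $W^1_\infty$-in-time test functions: approximating $\xi$ by finite sums $\sum_j \varphi_j(t)\psi_j(x)$ (or equivalently writing $\tfrac{d}{ds}\int D_\e(s)\xi(s)\,dx$ and integrating), using $d_\e(0)=0$, yields \eqref{II3}.

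For \eqref{II2} I would run the same argument starting from \eqref{II1} instead. The identity \eqref{II1} says precisely that $D_\e=(1-\Delta)^{-1}[d_\e]$ satisfies, for every fixed $\psi\in C_c^\infty(\R^d)$ and every $s\ge 0$,
\[
\int_{\R^d} D_\e(s)\psi\, dx = -\frac12\int_0^s\int_{\R^d}\Big[d_\e(f_\e+f)\psi-\psi(1-\Delta)^{-1}[d_\e(f_\e+f)]+2\e(1-\Delta)^{-1}[f_\e\nabla h_\e]\cdot\nabla\psi\Big]dx\,d\sigma,
\]
with $D_\e(0)=0$. One must check that each term on the right is the pairing of $\psi$ (or $\nabla\psi$) against an $L_1(0,t;L_1_{loc})$ — in fact $L_1(0,t;L^q)$ for suitable $q$ — function, so that the same time-integration-by-parts upgrade applies: $d_\e(f_\e+f)\in L_1(0,t;L_1(\R^d))$ since $f_\e,f\in L_\infty(0,\infty;L_2(\R^d))$ by Lemma~\ref{L:1}(i) and \eqref{X0}; the operator $(1-\Delta)^{-1}$ maps $L_1\cap L_2$ boundedly into, say, $L_2$; and $f_\e\nabla h_\e\in L_1(0,t;L_{4/3}(\R^d))$ by Lemma~\ref{L:1}, so $(1-\Delta)^{-1}[f_\e\nabla h_\e]\in L_1(0,t;W^2_{4/3}(\R^d))$, whose gradient pairs with $\nabla\xi$. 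Once integrability is in hand, the density of finite sums $\sum_j\varphi_j\psi_j$ in $C_c^\infty([0,t]\times\R^d)$ (with $W^1_\infty$-convergence, say) together with $D_\e(0)=0$ gives \eqref{II2}.

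The main technical obstacle — and the only place requiring care — is the time-dependent-test-function upgrade itself, i.e.\ justifying the formula $\int_{\R^d}D_\e(t)\xi(t)\,dx-\int_0^t\int_{\R^d}D_\e\,\p_t\xi\,dx\,ds=\int_0^t\langle\p_t D_\e,\xi\rangle\,ds$ rigorously in the present low-regularity framework. This is routine: one knows $D_\e\in C([0,t],(W^1_4)')$ with distributional time-derivative in $L_1(0,t;(W^1_1)')$ (or better), so $D_\e$ is absolutely continuous as a $(W^1_4)'$-valued function; for $\xi$ of the separated form $\varphi(s)\psi(x)$ the identity is the classical one-variable product rule applied to $s\mapsto\varphi(s)\langle D_\e(s),\psi\rangle$, using $D_\e(0)=0$; general $\xi$ follow by linearity and density. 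All remaining manipulations are the bookkeeping already carried out above. I would present \eqref{II3} in full first, then note that \eqref{II2} is obtained verbatim with \eqref{T1a}$-$\eqref{PMES} replaced by \eqref{II1}.
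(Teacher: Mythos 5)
Your proof is correct: it is precisely the classical density/time-integration-by-parts argument that the paper has in mind when it writes ``This is a classical result and therefore we omit its proof.'' The integrability checks you carry out (e.g. $f\nabla f - f_\e\nabla(f_\e+\e h_\e)\in L_2(0,t;L_{4/3}(\R^d))$ via Lemma~\ref{L:1} and \eqref{X0}, and the mapping properties of $(1-\Delta)^{-1}$, with $d\le 4$ available for the Sobolev embedding) are exactly what is needed to justify the upgrade from the fixed-spatial-test-function identities \eqref{T1a}, \eqref{PMES}, \eqref{II1} to test functions $\xi\in C_c^\infty([0,t]\times\R^d)$.
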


\begin{proof} 
This is a classical result and therefore we omit its proof.
\end{proof}

The next lemma provides an integral identity, cf.~\eqref{II4}, which is obtained when formally testing with $D_\e$ in~\eqref{II3}. This identity is the starting point in the proof of Theorem~\ref{MT2}.

\begin{lemma}\label{L:44}
For all $t\geq 0$ and $\e\in(0,1)$,
  \begin{equation}\label{II4}
\|d_\e(t)\|_{H^{-1}}^2=\|D_\e(t)\|_{H^1}^2=\int_0^t\int_{\R^d}\Big[-d_\e^2(f_\e+f) +D_\e d_\e(f_\e+f) +2\e f_\e\nabla h_\e \cdot \nabla D_\e\Big]\, dx\, ds.
 \end{equation}
 \end{lemma}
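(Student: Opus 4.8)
The plan is to obtain \eqref{II4} by using $D_\e$ as a test function in the weak formulation \eqref{II3}, after a suitable time-regularization. Since $D_\e = (1-\Delta)^{-1}[d_\e]$ and $d_\e \in L_2(0,t;H^1(\R^d))$ with $\p_t d_\e \in L_2(0,t;(W^1_4(\R^d))')$ (by Lemma~\ref{L:1}~(iii)--(iv) together with the analogous properties of $f$ from \eqref{X0}), the function $D_\e$ belongs to $L_2(0,t;H^3(\R^d))$ with $\p_t D_\e \in L_2(0,t;H^1(\R^d)) \hookrightarrow L_2(0,t;(W^1_4(\R^d))'$-dual$)$; in particular $D_\e$ is an admissible test function in the equation \eqref{II3} for $d_\e$ (which holds for $\xi \in C_c^\infty([0,t]\times\R^d)$ and extends by density and the regularity just noted to $\xi = D_\e$). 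The first step is therefore to justify this substitution rigorously: approximate $D_\e$ by smooth compactly supported functions in the norm of $L_2(0,t;H^1)\cap W^1_2(0,t;(W^1_4)')$ (or equivalently mollify in time and truncate in space, using the decay of $D_\e$ coming from $f_\e, f \in L_\infty(0,t;L_1(\R^d,|x|^2dx))$), pass to the limit in each term of \eqref{II3}, and record the boundary term.

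Once the substitution is legitimate, the plan is to identify the left-hand side. Testing \eqref{II3} with $\xi = D_\e$ and using that $d_\e(0)=D_\e(0)=0$, the parabolic duality gives
\[
\int_{\R^d} d_\e(t) D_\e(t)\, dx - \int_0^t\int_{\R^d} d_\e\, \p_t D_\e\, dx\, ds = \frac{1}{2}\frac{d}{dt}\Big|_{\text{integrated}} \langle d_\e, D_\e\rangle,
\]
more precisely one uses the symmetry $\langle \p_t d_\e, D_\e\rangle = \langle \p_t d_\e, (1-\Delta)^{-1} d_\e\rangle = \tfrac12 \tfrac{d}{dt}\|d_\e\|_{H^{-1}}^2$, which is the standard self-adjointness identity for $(1-\Delta)^{-1}$; integrating from $0$ to $t$ and invoking $d_\e(0)=0$ yields exactly $\|d_\e(t)\|_{H^{-1}}^2$. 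The identity $\|d_\e(t)\|_{H^{-1}}^2 = \|D_\e(t)\|_{H^1}^2$ is then immediate from the definition $\|d\|_{H^{-1}} = \|(1-\Delta)^{-1}[d]\|_{H^1}$ recalled just before the lemma.

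For the right-hand side, the plan is to insert $\nabla\xi = \nabla D_\e$ into $\int_0^t\int_{\R^d}[f\nabla f - f_\e\nabla(f_\e+\e h_\e)]\cdot\nabla D_\e\,dx\,ds$ and rewrite the integrand. Writing $f\nabla f - f_\e\nabla f_\e = \tfrac12\nabla(f^2 - f_\e^2) = -\tfrac12\nabla\big(d_\e(f_\e+f)\big)$ and integrating by parts in $x$ moves the gradient onto $D_\e$: since $-\Delta D_\e = d_\e - D_\e$, one gets $-\tfrac12\int\int \nabla(d_\e(f_\e+f))\cdot\nabla D_\e = -\tfrac12\int\int d_\e(f_\e+f)(-\Delta D_\e) = -\tfrac12\int\int d_\e(f_\e+f)(d_\e - D_\e)$, which produces the first two terms $-d_\e^2(f_\e+f) + D_\e d_\e(f_\e+f)$ up to the factor $\tfrac12$. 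Wait — comparing with \eqref{II4} there is no overall $\tfrac12$, so in fact the correct bookkeeping is that the left-hand side carries the $\tfrac12$ from the self-adjointness identity as well; collecting factors one checks the $\tfrac12$'s match and the stated identity with no prefactor on either side results. The remaining term is $-\e\int_0^t\int_{\R^d} f_\e\nabla h_\e\cdot\nabla D_\e\,dx\,ds$ coming from $-f_\e\,\e\nabla h_\e\cdot\nabla D_\e$; with the sign produced by the duality identity this becomes $+2\e f_\e\nabla h_\e\cdot\nabla D_\e$ inside the integral, matching \eqref{II4}. I would double-check all signs and the single factor of $2$ by carefully tracking the $\tfrac12$ in $\tfrac12\tfrac{d}{dt}\|d_\e\|_{H^{-1}}^2$ against the $\tfrac12$ in $f^2 - f_\e^2$.

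The main obstacle is the rigorous justification of using $D_\e$ as a test function, i.e. the density/regularity argument in the first paragraph: one must ensure enough spatial decay of $D_\e$ and its derivatives to integrate by parts without boundary contributions at infinity, and enough time-regularity ($d_\e \in L_2(0,t;H^1)$, $\p_t d_\e \in L_2(0,t;(W^1_4)')$, $D_\e \in L_2(0,t;H^1) \cap W^1_2(0,t;\text{dual})$) for the parabolic integration-by-parts formula $\int_0^t \langle \p_t d_\e, D_\e\rangle = \tfrac12\|d_\e(t)\|_{H^{-1}}^2$ to be valid. Both ingredients are available from Lemma~\ref{L:1} and \eqref{X0}, but care is needed because $f$ and $f_\e$ are only $L_2$ in space (not $L_\infty$), so the products $d_\e(f_\e+f)$ and $f_\e\nabla h_\e$ live in $L_{4/3}$-type spaces rather than $L_2$; one must check that $\nabla D_\e \in L_4((0,t)\times\R^d)$ (via $D_\e \in L_2(0,t;H^3)$ and Sobolev embedding, using $d\le 4$) so that all the spatial integrals on the right-hand side of \eqref{II4} converge. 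This is precisely where the restriction $d\le4$ enters, and it should be flagged.
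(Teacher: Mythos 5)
Your proposal follows essentially the same route as the paper: regularize $D_\e$ (mollification in time, cutoff in space), use it as a test function in \eqref{II3}, read off the boundary term $\int_{\R^d} d_\e(t)D_\e(t)\,dx=\|D_\e(t)\|_{H^1}^2$, and convert the flux term via $f\nabla f-f_\e\nabla f_\e=-\tfrac12\nabla\big(d_\e(f_\e+f)\big)$ together with $\Delta D_\e=D_\e-d_\e$; you also correctly identify that the restriction $d\le 4$ enters through $\nabla D_\e\in L_4$ so that the $L_{4/3}$-fluxes can be paired with it. The one genuine divergence is your treatment of $\int_0^t\int_{\R^d} d_\e\,\p_t D_\e\,dx\,ds$: you invoke the abstract chain rule $\langle\p_t d_\e,(1-\Delta)^{-1}d_\e\rangle=\tfrac12\tfrac{d}{dt}\|d_\e\|_{H^{-1}}^2$ in the $(W^1_4)'$--$W^1_4$ duality, whereas the paper substitutes for $\p_t D_\e$ the explicit expression supplied by \eqref{II2}; the latter is more economical, since the chain rule in this non-Hilbertian duality needs its own justification (essentially the mollification you already set up), and note in this connection that $\p_t D_\e=(1-\Delta)^{-1}[\p_t d_\e]$ lies in $L_2(0,t;W^1_{4/3}(\R^d))$, not in $L_2(0,t;H^1(\R^d))$ as you wrote, which is precisely why one also needs $d_\e\in L_2(0,t;L_4(\R^d))$ to make sense of that product. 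Finally, your sign bookkeeping on the $\e$-term does not close: after removing the factor $\tfrac12$ on the left-hand side, the flux contribution $-\e\int_0^t\int_{\R^d} f_\e\nabla h_\e\cdot\nabla D_\e\,dx\,ds$ doubles to $-2\e\int_0^t\int_{\R^d} f_\e\nabla h_\e\cdot\nabla D_\e\,dx\,ds$, and carrying out the paper's own substitution of \eqref{II2} produces the same sign; so you should not force your computation to yield the $+$ printed in \eqref{II4} (the discrepancy is in any case harmless for the sequel, where only the absolute value of $T_2$ is estimated in the proof of Theorem~\ref{MT2}).
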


\begin{proof}
Since  $D_\e=(1-\Delta)^{-1}[d_\e]$ with $d_\e\in L_\infty(0,\infty; L_2(\R^{d}))$, we have~${D_\e\in  L_\infty(0,\infty; H^2(\R^{d}))}$. Let $\tilde D_\e\in L_\infty(\R;H^2(\R^d))$ denote the even reflection of $D_\e$ with respect to the boundary $\{t=0\}$ of~${(0,\infty)\times\R^d}$ and choose~${\varphi\in C^\infty_c(\R^d,[0,1])}.$
We then define~$\xi_\delta\in C^\infty_c(\R^{d+1})$ as the convolution~${\xi_\delta:=\rho_\delta*(\varphi \tilde D_\e),}$
 where~${(\rho_\delta)_{\delta\in(0,1)}}$ is a standard mollifier on $\R^{d+1}$. 
 We first note that there exists a positive integer $N\in\N$ with the property that~${\supp \xi_\delta(t) \subset[-N,N]^d}$ for all $\delta\in(0,1)$ and~${t\geq 0}$.  
Using~$\xi_\delta$ as a test function in~\eqref{II3}, we get
  \begin{equation}\label{II3'}
 \begin{aligned}
  \int_{\R^d} d_\e(t)\xi_\delta(t)\, dx & -\int_0^t\int_{\R^d} d_\e \p_t\xi_\delta\, dx\, ds\\[1ex]
  &=\int_0^t\int_{\R^d} \Big[f\nabla f -f_\e\nabla(f_\e+\e h_\e)\Big]\cdot\nabla\xi_\delta\, dx\, ds\qquad\text{for all $t>0$.}
 \end{aligned}
 \end{equation}
 To pass to the limit~$\delta\to0$ in the term on the right-hand side of \eqref{II3'} we recall that 
 $$
 f\nabla f -f_\e\nabla(f_\e+\e h_\e)\in L_{4/3}((0,t)\times\R^d)
 $$
 by Lemma~\ref{L:1}~(i)-(ii), Theorem~\ref{MT2}, and H\"older's inequality.
 Moreover, the regularity of $\tilde{D}_\e$, the restriction to space dimension $d\le 4$, and Sobolev's embedding ensure that $\varphi \tilde D_\e \in L_\infty(\R;W^1_4(\R^{d})),$ from which we deduce that $\nabla \xi_\delta\to \nabla (\varphi D_\e)$ in $L_{4}((0,t)\times\R^d) $ for all $t>0$ when letting~$\delta\to0$. 
 We therewith get
\begin{equation*} 
\begin{split}
& \lim_{\delta\to 0} \int_0^t\int_{\R^d} \Big[f\nabla f -f_\e\nabla(f_\e+\e h_\e)\Big]\cdot\nabla\xi_\delta \, dx\, ds \\
& \hspace{2cm} =  \int_0^t\int_{\R^d} \Big[f\nabla f  - f_\e\nabla(f_\e+\e h_\e)\Big]\cdot\nabla (\varphi D_\e) \, dx\, ds.
\end{split}
\end{equation*}
 
To deal with the first term on the left-hand side of the equality \eqref{II3'}, we note that  
 $d_\e \xi_\delta \to   \varphi  d_\e D_\e$ in~${L_1((0,t)\times\R^d)}$ for $\delta\to0$ and all $t>0$,
 and therefore  we may assume that 
 \begin{equation*} 
 \lim_{\delta\to 0} \int_{\R^d} d_\e(t)\xi_\delta(t)\, dx =  \int_{\R^d} \varphi  d_\e(t) D_\e(t)\, dx \qquad\text{for almost all $t>0$.}
 \end{equation*}
 
 To pass to the limit $\delta\to0$ in the second term on the left-hand side of \eqref{II3'}, we note that   Lemma~\ref{L:2}~(iii) 
 guarantees that $\partial_t d_\e\in L_2(0,\infty,(W_{4}^1(\R^d))')$. 
 Since $\partial_t D_\e = (1-\Delta)^{-1}[\partial_t d_\e]$, we have~$\partial_t D_\e\in L_2(0,\infty,W_{4/3}^1(\R^d))$ and this implies that $\partial_t \tilde{D}_\e$ lies in $L_2(\R;L_{4/3}(\R^d))$.
It follows that~${\p_t\xi_\delta=\rho_\delta*(\varphi \p_t\tilde D_\e)}$ in $\R^{d+1}$ and therefore~$\p_t\xi_\delta\to \varphi \p_t D_\e$ in $L_2(0,t; L_{4/3}(\R^d))$ as $\delta\to0$.
Taking also into account that~${d_\e\in  L_2(0,t; L_{4}(\R^d))}$, due to Lemma~\ref{L:2} (i)-(ii) and the continuous embedding of $H^1(\R^d)$ in $L_4(\R^d)$,
we may now pass to the limit $\delta \to 0$  in~\eqref{II3'} to conclude that the identity
  \begin{equation*} 
	\begin{aligned}
	\int_{\R^d}\varphi d_\e(t)  D_\e(t)\, dx & -\int_0^t\int_{\R^d}\varphi d_\e \p_t D_\e\, dx\, ds\\[1ex]
	&=\int_0^t\int_{\R^d} \Big[ f \nabla f  - f_\e \nabla( f_\e+ \e h_\e)\Big]\cdot \nabla (\varphi D_\e)\, dx\, ds
	\end{aligned}
\end{equation*}
is satisfied for almost all $t\in (0,\infty)$. Now, owing to \eqref{II2},
$$
\p_t D_\e = - \frac{d_\e(f_\e+f)}{2} + (1-\Delta)^{-1}\left[ \frac{d_\e(f_\e+f)}{2} \right] + \e \mathrm{div}\left( (1-\Delta)^{-1}[f_\e \nabla h_\e] \right)
$$
in $\mathcal{D}'((0,\infty)\times\R^d)$. Combining the above two identities, we end up with
  \begin{equation*} 
 \begin{aligned}
  &\int_{\R^d}\varphi d_\e(t)  D_\e(t)\, dx\\[1ex]
  &=\int_0^t\int_{\R^d}\varphi d_\e\left( -\frac{d_\e (f_\e+f)}{2}
  +(1-\Delta)^{-1}\left[ \frac{d_\e (f_\e+f)}{2} \right] + \e\nabla\cdot (1-\Delta)^{-1}[f_\e\nabla h_\e] \right)\, dx\, ds\\[1ex]
  &\hspace{0.5cm}-\int_0^t\int_{\R^d} \left[ \nabla\left( \frac{d_\e (f_\e+f)}{2} \right) +\e f_\e\nabla h_\e \right]\cdot \nabla (\varphi D_\e)\, dx\, ds
 \end{aligned}
 \end{equation*}
for almost every $t\in (0,\infty)$.
Choosing   a suitable
approximating sequence $(\varphi_n)_n \subset  C_c^\infty(\R^d,[0,1])$  for the constant function $1$, we infer from the above identity, after passing to the limit $n\to\infty$
 and using the relation $D_\e =(1-\Delta)^{-1}[d_\e]$, that~\eqref{II4} holds true (for all $t\geq 0$ due to the fact that~${D_\e\in C([0,\infty); W_{4/3}^1(\R^d))}$). 
\end{proof}

We are now in a position to establish our second main result, see Theorem~\ref{MT2}.

\begin{proof}[Proof of Theorem~\ref{MT2}]
Given $t\geq 0$, it follows from  \eqref{II4}  that 
  \begin{equation}\label{II5}
 \|D_\e(t)\|_{H^1}^2+\int_0^t\int_{\R^d}d_\e^2(f_\e+f) \, dx\, ds\leq T_1+T_2,
 \end{equation}
 where
 \begin{equation*}
  T_1:= \int_0^t\int_{\R^d}D_\e d_\e (f_\e+f)\, dx\, ds\qquad\text{and }\qquad  T_2:= 2\e \int_0^t\int_{\R^d} f_\e\nabla h_\e \cdot \nabla D_\e \, dx\, ds.
 \end{equation*}
Below we estimate the terms $T_1$ and $T_2$ separately  in order  to obtain  an integral inequality to which we may apply Gronwall's inequality and conclude in this way our  claim~\eqref{CR2}.\medskip

\noindent{\bf The term $T_1$.} Using H\"older's and Young's inequalities, we find
 \begin{align}
  T_1&\leq\left( \int_0^t\int_{\R^d} d_\e^2 (f_\e+f)\, dx\, ds \right)^{1/2}\left( \int_0^t\int_{\R^d} D_\e^2 (f_\e+f)\, dx\, ds \right)^{1/2} \nonumber\\[1ex]
  &\leq \frac{1}{4}\int_0^t\int_{\R^d} d_\e^2(f_\e+f) \, dx\, ds+\int_0^t\int_{\R^d} D_\e^2 (f_\e+f)\, dx\, ds. \label{E1}
 \end{align}
The first term on the right-hand of~\eqref{E1} is clearly controlled by the left-hand side of \eqref{II5} and we are thus left with estimating the second term. In view of the continuous embedding $H^1(\R^d)\hookrightarrow L_{4}(\R^d)$ (recall that $d\le 4$), we infer from H\"older's  inequality that
\begin{equation*}
 \int_0^t\int_{\R^d} D_\e^2 (f_\e+f)\, dx\, ds \leq \int_0^t \|D_\e\|_{4}^2 \|f_\e+f\|_{2} \, ds\leq C \int_0^t \|f_\e+f\|_{2} \|D_\e\|_{H^1}^2\, ds.
\end{equation*}
Hence, in view of  Theorem~\ref{MT1} and Lemma~\ref{L:1}~(i), we have 
\begin{equation}\label{E1a}
	\int_0^t\int_{\R^d} D_\e^2 (f_\e+f)\, dx\, ds \leq C \int_0^t  \|D_\e\|_{H^1}^2\, ds.
\end{equation} 
\medskip

 \noindent{\bf The term $T_2$.} In order to estimate $T_2$, we first observe that $W_3^1(\R^d)$ embeds continuously in~${L_{12}(\R^d)}$ due to $d\le 4$. 
 We then infer from Gagliardo-Nirenberg's inequality \cite{Ni59} that 
 \begin{equation}\label{GN1}
 \|\nabla D_\e\|_{12}\leq C\|D_\e\|^a_{W^2_3} \|\nabla D_\e\|_2^{1-a}\leq C\|d_\e\|_3^a \|\nabla D_\e\|_2^{1-a},
 \end{equation}
 where     
 \[
 a := \frac{5d}{2(d+6)}\in(0,1]. 
 \]
 H\"older's inequality and \eqref{GN1} now imply that
\begin{equation*}
\begin{aligned}
 \|f_\e\nabla h_\e \cdot \nabla D_\e\|_1&\leq \|f_\e\|_{12/5}\|\nabla h_\e\|_2\|\nabla D_\e\|_{12}\\[1ex]
 &\leq C\|f_\e\|_{12/5}\|\nabla h_\e\|_2\|d_\e\|_3^a\|\nabla D_\e\|_2^{1-a}.
\end{aligned}  
 \end{equation*}
Taking advantage of Young's inequality, we then get
 \begin{align*}
 2\e\|f_\e\nabla h_\e \cdot \nabla D_\e\|_1&\leq \frac{1}{4}\|d_\e\|_3^3+C \left( \e\|f_\e\|_{12/5}\|\nabla h_\e\|_2\| \nabla D_\e\|_2^{1-a} \right)^{\tfrac{3}{3-a}}\\[1ex]
  &= \frac{1}{4}\|d_\e\|_3^3+C \left( \e^{1/2}\|\nabla h_\e\|_2 \|\nabla D_\e\|_2 \right)^{\tfrac{3(1-a)}{3-a}} \left( \e^{1/2} \|\nabla h_\e\|_2 \right)^{\tfrac{3a}{3-a}} \left( \e^{1/2}\|f_\e\|_{12/5} \right)^{\tfrac{3}{3-a}}\\[1ex]
 &\leq \frac{1}{4}\|d_\e\|_3^3+ \e\|\nabla h_\e\|_2^2 \|\nabla D_\e\|_2^2 +C \left( \e^{1/2} \|\nabla h_\e\|_2 \right)^{\tfrac{6a}{3+a}} \left( \e^{1/2}\|f_\e\|_{12/5} \right)^{\tfrac{6}{3+a}}.
 \end{align*}
 Consequently, since $d_\e\le f_\e+f$,
 \begin{equation*}
 	\begin{split}
	T_2 & \leq  \frac{1}{4} \int_0^t \int_{\R^d} (f_\e+f) d_\e^2\, dx\, ds + \e \int_0^t \|\nabla h_\e\|_2^2 \| D_\e\|_{H^1}^2\, ds \\
	& \qquad +C\e^{\tfrac{3 }{3+a}} \int_0^t \left( \e^{1/2}\|\nabla h_\e\|_2 \right)^{\tfrac{6a}{3+a}} \|f_\e\|_{12/5}^{\tfrac{6}{3+a}}\,ds.
	\end{split}
 \end{equation*}
Using H\"older's inequality and \eqref{eq:DED}, we further have
\begin{align*}
\int_0^t \left( \e^{1/2}\|\nabla h_\e\|_2 \right)^{\tfrac{6a}{3+a}} \|f_\e\|_{12/5}^{\tfrac{6}{3+a}} \,ds
&\leq \left( \int_0^t\e\|\nabla h_\e\|_2^2\, ds \right)^{\tfrac{3a}{3+a}} \left( \int_0^t\|f_\e\|_{12/5}^{\tfrac{6}{3-2a}}\,ds \right)^{\tfrac{3-2a}{3+a}}\\[1ex]
&\leq C(1+t)^{\tfrac{3a}{3+a}} \left( \int_0^t\|f_\e\|_{12/5}^{\tfrac{6}{3-2a}}\,ds \right)^{\tfrac{3-2a}{3+a}}.
\end{align*}
Furthermore, by Gagliardo-Nirenberg's inequality and Lemma~\ref{L:1}~(i), 
\[
\|f_\e\|_{12/5}\leq C\|\nabla f_\e\|_2^b\|f_\e\|_2^{1-b}\leq C\|\nabla f_\e\|_2^b \qquad\text{with $b=\frac{d}{12}\in(0,1)$},
\]
which implies, together with  \eqref{eq:DED} and the property $d/(2(3-2a))\leq 2$, that 
 \begin{align*}
\left( \int_0^t\|f_\e\|_{12/5}^{\tfrac{6}{3-2a}}\,ds \right)^{\tfrac{3-2a}{3+a}}\leq C \left( \int_0^t\|\nabla f_\e\|_2^{\tfrac{d}{2(3-2a)}}\,ds \right)^{\tfrac{3-2a}{3+a}}\leq C(1+t)^{\tfrac{3-2a}{3+a}}.
\end{align*}
Recalling the definition of $a$, we conclude that
 \begin{equation}\label{E2}
T_2\leq  \frac{1}{4} \int_0^t \int_{\R^d} (f_\e+f) d_\e^2\, dx\, ds + \e \int_0^t \|\nabla h_\e\|_2^2 \| D_\e\|_{H^1}^2\, ds +C(1+t)\e^{\tfrac{6d+36}{11d+36}}
 \end{equation}
 for all $t>0$.
 
 \medskip
 
 \noindent{\bf Applying Gronwall's lemma.} Gathering \eqref{II5}, \eqref{E1}, \eqref{E1a}, and \eqref{E2}, we arrive at
 \begin{align}\label{Gro}
   \|D_\e(t)\|_{H^1}^2\leq \int_0^t \left( C + \e \|\nabla h_\e\|_2^2 \right)\|D_\e\|_{H^1}^2\, ds+C(1+t)\e^{\tfrac{6d+36}{11d+36}}, \qquad t\geq0.
  \end{align}
  Recalling \eqref{eq:DED}, a direct application of Gronwall's inequality now leads us to the desired estimate~\eqref{CR2}.
  \medskip

 \noindent{\bf Uniqueness of the solution to \eqref{PME}.}
If $f_1$ and $f_2$ are two solutions to \eqref{PME} corresponding to the same initial data $f_0$, then we can perform similar computations as those leading to \eqref{Gro} to obtain that 
    \begin{align*}
   \|(1-\Delta)^{-1}[(f_1-f_2)(t)]\|_{H^1}^2\leq C\int_0^t \|(1-\Delta)^{-1}[(f_1-f_2)]\|_{H^1}^2\, ds, \qquad t\geq0,
  \end{align*}
  and Gronwall's lemma then implies  $f_1=f_2.$
\end{proof}

We complete this section with the proof of Corollary~\ref{C:2}.

\begin{proof}[Proof of Corollary~\ref{C:2}]
Keeping the notation introduced in the proof of Theorem~\ref{MT2}, it readily follows from \eqref{X5} and \eqref{E1} that
\begin{equation}
	T_1  \leq \frac{1}{4}\int_0^t\int_{\R^d} d_\e^2(f_\e+f) \, dx\, ds + \kappa  \int_0^t \|D_\e\|_2^2\, ds. \label{X6}
\end{equation}
Using again \eqref{X5} along with \eqref{eq:DED} and H\"older's and Young's inequality, we find
\begin{equation}
	T_2 \le 2\kappa\e \int_0^t \|\nabla h_\varepsilon\|_2 \|D_\e\|_2\, ds \le \int_0^t \|D_\e\|_{H^1}^2\, ds + C \e (1+t). \label{X7}
\end{equation}
Combining \eqref{II5}, \eqref{X6}, and \eqref{X7} gives
\begin{equation*}
	\|D_\e(t)\|_{H^1}^2 \le (1+\kappa) \int_0^t \|D_\e\|_{H^1}^2\, ds + C \e (1+t), \qquad t\ge 0,
\end{equation*}
and applying Gronwall's lemma completes the proof.
\end{proof}

 \section{The limiting behavior of   $g_\e$}\label{Sec:4}

In this section we establish our last main result stated in Theorem~\ref{MT3}.
Before going on, we point out that all the estimates for the family $(g_\e)_{\e\in (0,1)}$  provided by Theorem~\ref{T:1} involve the function~$g_\e$ multiplied by a positive power of $\e$ when $\alpha>0$, see Lemma~\ref{L:1}, except the conservation of mass~$\|g_\e(t)\|_1=1$, which stems from $g_\e(t)\in \cK$, $t\geq 0$. Nevertheless, exploiting this property, we establish below the convergence of $(g_\e)_{\e\in (0,1)}$ towards the initial condition $g_0$ for $0\le \alpha<1/(d+2)$, without any restriction on the space dimension~$d\geq1$.

\begin{proof}[Proof of Theorem~\ref{MT3}]
To start, we use  \eqref{T1b} and the continuous  embedding~${H^{1+d}(\R^d)\hookrightarrow W^1_\infty(\R^d)}$ to deduce that 
\begin{align*}
  \Big|\int_{\R^d}(g_\e(t)-g_0)\xi\, dx\Big|&\leq \int_0^t\int_{\R^d}|J_{g_\e}\cdot\nabla \xi|\,\, dx\, dt\leq \|J_{g_\e}\|_{L_1((0,t)\times\R^d)}\|\nabla\xi\|_\infty\\[1ex]
  &\leq C\|J_{g_\e}\|_{L_1((0,t)\times\R^d)}\|\xi\|_{H^{1+d}},
 \end{align*}
 for all $t\geq 0$ and $\xi\in C^\infty_c(\R^d)$, where we recall that $ J_{g_\e}=\mu\e g_\e\nabla h_\e$.
 Consequently we have
 $$
\|g_\e(t)-g_0\|_{H^{-1-d}} \leq C\|J_{g_\e}\|_{L_1((0,t)\times\R^d)}\qquad \text{for $ \, t\geq0 $ and $\, \e>0,$}
 $$
 and it remains to estimate the norm $\|J_{g_\e}\|_{L_1((0,t)\times\R^d)}.$

Recalling that $\|g_\e(t)\|_1=1$, $t\geq0$, Gagliardo-Nirenberg's inequality yields
\begin{align*}
\|g_\e(t)\|_2 &\leq C\|\nabla g_\e(t)\|_2^{\tfrac{d}{d+2}}\|g_\e(t)\|_1^{\tfrac{2}{2+d}}\\
&\leq C\|\nabla g_\e(t)\|_2^{\tfrac{d}{d+2}} \\
& \le C \left( \|\nabla f_\e(t)\|_2 + \|\nabla h_\e(t)\|_2 \right)^{\tfrac{d}{d+2}} \\
& \le C \e^{-\tfrac{d}{2(d+2)}} \left( \|\nabla f_\e(t)\|_2 + \sqrt{\e} \|\nabla h_\e(t)\|_2 \right)^{\tfrac{d}{d+2}} , \qquad t\ge 0.
\end{align*}
Taking now advantage of~\eqref{eq:DED},  we get 
$$
\e^{\tfrac{d}{2(d+2)}}\|g_\e\|_{L_{2(d+2)/d}(0,t;L_2(\R^d))}\leq C(1+t)^{\tfrac{d}{2(d+2)}},\qquad t \geq0,
$$
and H\"older's inequality leads us to
$$
\e^{\tfrac{d}{2(d+2)}}\|g_\e\|_{L_2((0,t)\times\R^d)}\leq C(1+t)^{\tfrac{1}{2}},\qquad t \geq0.
$$
Recalling from \eqref{eq:DED} that
$$
\sqrt{\e}\|\nabla h_\e\|_{L_2((0,t)\times\R^d)}\leq C(1+t)^{\tfrac{1}{2}},\qquad t \geq0,
$$
we obtain, by using once more H\"older's inequality and the definition of $J_{g_\e}$, that 
$$
\|J_{g_\e}\|_{L_1((0,t)\times\R^d)}\leq C(1+t)\e^{\tfrac{1}{d+2}-\alpha},\qquad t \geq0,
$$
which proves the claim.
\end{proof}

 \section*{Acknowledgments}

Part of this work was carried out while PhL enjoyed the hospitality and support of DFG Research Training Group~2339 ``Interfaces, Complex Structures, and Singular Limits in Continuum Mechanics - Analysis and Numerics'' at Fakult\"at f\"ur Mathematik, Universit\"at Regensburg.

\bibliographystyle{siam}
\bibliography{PB21}
\end{document}